\documentclass[english,oneside,a4paper,11pt]{article}
\usepackage{amssymb}
\usepackage{latexsym}
\usepackage[english]{babel}
\usepackage{amsmath}
\usepackage{amsfonts}
\usepackage{amsbsy}
\usepackage[mathscr]{eucal}
\usepackage[latin1]{inputenc}
\usepackage{verbatim}
\usepackage{mathrsfs}
\usepackage{graphicx}
\usepackage{float}
\usepackage{bbm}
\usepackage{lscape}
\usepackage{mathdots}
\usepackage{empheq}
\usepackage[dvipsnames]{xcolor}
\usepackage{array}
\usepackage{a4wide}
\usepackage{enumitem}
\usepackage[dvips,colorlinks,bookmarks,breaklinks]{hyperref}  
\hypersetup{linkcolor=black,citecolor=black,filecolor=black,urlcolor=black}
\usepackage{slashbox,pict2e}
\usepackage{natbib}

\providecommand{\abs}[1]{\left\lvert#1 \right\rvert}
\providecommand{\norm}[1]{\left\lvert \! \left\lvert#1
\right\rvert \! \right\rvert}

\providecommand{\Log}[1]{\textnormal{Log} #1}

\newtheorem{theorem}{Theorem}

\newenvironment{proof}
{\begin{trivlist}\item[\hskip%
\labelsep{{\it \noindent Proof.}}]}{\hfill $\square$
\end{trivlist}}

\newcounter{counter}
\newcommand{\counter}{\stepcounter{counter}\thecounter}

\newcounter{examplecounter}
\newcommand{\examplecounter}{\stepcounter{examplecounter}\theexamplecounter}

\newenvironment{remark}
{\begin{trivlist}\item[\hskip%
\labelsep{{\it \noindent Remark \counter}}]}{\hfill
\end{trivlist}}

\newenvironment{example}
{\begin{trivlist}\item[\hskip%
\labelsep{{\sc \noindent Example \examplecounter}}]}{\hfill
\end{trivlist}}

\newenvironment{acknowledgements}
{\begin{trivlist}\item[\hskip%
\labelsep{\bf \noindent Acknowledgements.}]}{\hfill
\end{trivlist}}

\selectlanguage{english}

\numberwithin{equation}{section}

\allowdisplaybreaks

\begin{document}
\begin{center}
{\LARGE \textbf{Almost sure convergence for weighted \\ sums of pairwise PQD random variables}} \\[25pt]
{\Large Jo\~{a}o Lita da Silva\footnote{\textit{E-mail address:} \texttt{jfls@fct.unl.pt; joao.lita@gmail.com}}} \\
\vspace{0.1cm}
\textit{Department of Mathematics and GeoBioTec \\ Faculty of Sciences and Technology \\
NOVA University of Lisbon \\ Quinta da Torre, 2829-516 Caparica,
Portugal}
\end{center}


\bigskip

\begin{abstract}
We obtain strong laws of large numbers of  Marcinkiewicz-Zygmund's type for weighted sums of pairwise positively quadrant dependent random variables stochastically dominated by a random variable $X \in \mathscr{L}_{p}$, $1 \leqslant p < 2$. We use our results to establish the strong consistency of estimators which emerge from regression models having pairwise positively quadrant dependent errors.
\end{abstract}

\bigskip

{\textit{Key words and phrases:} Strong law of large numbers, weighted sum, positively quadrant dependent random variables, regression models, strong consistency}

\bigskip

{\small{\textit{2010 Mathematics Subject Classification:} 60F15, 62J05, 62J07}}

\bigskip

\section{Introduction}

\indent

The classical Kolmogorov's strong law of large numbers is, perhaps, the most famous strong limit theorem in Probability Theory. Originally presented in 1933 by Andrei Nikolaevich Kolmogorov (see \citealt{Kolmogorov33}), it can be stated as follows: if $X_{1},X_{2},\ldots$ is a sequence of independent and identically distributed random variables such that $\mathbb{E} \, X_{1}$ exists then
\begin{equation*}
\frac{X_{1} + X_{2} + \ldots + X_{n}}{n} \overset{\textnormal{a.s.}}{\longrightarrow} \mathbb{E} \, X_{1}.
\end{equation*}
In \citealt{Walk05}, an elegant short proof for a Kolmogorov's strong law of large numbers under very general assumptions was given by means of Tauberian theorems. In this paper, we shall follow Walk's statement to establish a Kolmogorov's strong law of large numbers for weighted pairwise positively quadrant dependent random variables. This result improves the corresponding one announced in \citealt{Louhichi00}  for sequences of (positively) associated random variables. Further, we shall present strong law of large numbers of Marcinkiewicz-Zygmund's type for weighted pairwise positively quadrant dependent random variables by using a maximal inequality recently announced in \citealt{Lita18} for these dependent structures.

It is important to stress out that in mathematical statistics, statistical physics or reliability theory, many stochastic models involving dependent random variables have arose over the last decades (see \citealt{Bulinski07}, \citealt{Hutchinson90}). For instance, most bivariate distributions in reliability theory are positively quadrant dependent (see \citealt{Hutchinson90}), whence, any efforts to establish formal results for these (and other) dependent structures are always of interest.

In last section, we provide some statistical applications of our assertions, namely, in strong consistency of estimators which can be found in regression models. Our statements not only extend others established lately in this issue (see \citealt{Lita13} and \citealt{Lita14}), but also allow us to consider statistical models having dependent random variables making them more appropriate and realistic.

Throughout, $x \wedge y$ and $x \vee y$ will stand for $\min\{x, y \}$ and $\max\{x,y \}$, respectively. Associated to a probability space $(\Omega, \mathcal{F}, \mathbb{P})$, we shall consider the space $\mathscr{L}_{p}$ $(p>0)$ of all measurable functions $X$ (necessarily random variables) for which $\mathbb{E} \lvert X \rvert^{p} < \infty$. For any measurable function $X$ we will define its positive and negative parts by $X^{+} = X \vee 0$ and $X^{-} = (-X) \vee 0$, respectively. Given an event $A$ we shall denote the indicator random variable of the event $A$ by $I_{A}$. All over, the function $x \mapsto \log (\abs{x} \vee \mathrm{e})$ will be denoted by $\Log \, x$. To make the computations be simpler looking, we shall employ the letter $C$ to denote any positive constant that can be explicitly computed, which is not necessarily the same on each appearance; the symbol $C(p)$
has identical meaning with the additional information that the constant depends on $p$.

\section{Almost sure convergence}

\indent

In the past, many authors have considered limit theorems involving pairwise positively quadrant dependent random variables (see, \citealt{Birkel89}, \citealt{Birkel93}, \citealt{Matula05} or \citealt{Newman84} among others). We shall proceed their study by establishing strong limits for weighted sums of these dependent random variables.

The concept of positively quadrant dependence for a pair of random variables due to \citealt{Lehmann66} can be given for sequences of random variables as follows: a sequence $\{X_{n}, \, n \geqslant 1 \}$ of random variables is said to be \emph{pairwise positively quadrant dependent} (pairwise PQD) if
\begin{equation*}
\mathbb{P} \left\{X_{k} \leqslant x_{k}, X_{j} \leqslant x_{j}  \right\} - \mathbb{P} \left\{X_{k} \leqslant x_{k} \right\} \mathbb{P} \left\{X_{j} \leqslant x_{j}  \right\} \geqslant 0
\end{equation*}
for all reals $x_{k}, x_{j}$ and all positive integers $k,j$ such that $k \neq j$.

Let $X,Y$ be random variables and $\ell$ a positive constant. Throughout, we shall consider the function $g_{\ell}(t) := (t \wedge \ell) \vee (- \ell)$ and the covariance quantity
\begin{equation}\label{eq:2.1}
G_{X,Y}(t) := \mathrm{Cov} \big(g_{t}(X),g_{t}(Y) \big) = \int_{-t}^{t} \int_{-t}^{t} \Delta_{X,Y}(x,y) \, \mathrm{d}x \, \mathrm{d}y
\end{equation}
where $\Delta_{X,Y}(x,y) := \mathbb{P} \left\{X \leqslant x, Y \leqslant y \right\} - \mathbb{P} \left\{X \leqslant x \right\} \mathbb{P} \left\{Y \leqslant y \right\}$. Further, a random sequence $\{X_{n}, \, n \geqslant 1 \}$ is \emph{stochastically dominated} by a random variable $X$ if there exists a constant $C>0$ such that $\sup_{n \geqslant 1} \mathbb{P} \left\{\abs{X_{n}} > t \right\} \leqslant C \, \mathbb{P} \left\{\abs{X} > t  \right\}$ for all $t > 0$ (see, for instance, \citealt{Lita15}).

Now, we state and prove a Kolmogorov's strong law of large numbers for weighted pairwise positively quadrant dependent random variables.

\begin{theorem}\label{thr:1}
Let $\{X_{n}, \, n \geqslant 1 \}$ be a sequence of pairwise PQD random variables stochastically
dominated by a random variable $X \in \mathscr{L}_{1}$. If $\{a_{n} \}$ is a sequence of constants satisfying $\sup_{n \geqslant 1} n^{-1} \sum_{k=1}^{n} a_{k}^{2} < \infty$ and
\begin{equation}\label{eq:2.2}
\sum_{1 \leqslant k < j \leqslant \infty} \lvert a_{k} a_{j} \rvert \int_{j}^{\infty} t^{-3} G_{X_{k},X_{j}}(t) \, \mathrm{d}t < \infty
\end{equation}
then $\sum_{k = 1}^{n} a_{k} (X_{k} - \mathbb{E} \, X_{k})/n \overset{\textnormal{a.s.}}{\longrightarrow} 0$.
\end{theorem}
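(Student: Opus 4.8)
The plan is to truncate each $X_k$ at the running index $n$ and then run a dyadic blocking argument, in the spirit of \cite{Walk05}. Put $S_n := \sum_{k=1}^n a_k(X_k - \mathbb{E}\,X_k)$. Stochastic domination gives $\sup_k\mathbb{E}\lvert X_k\rvert \leqslant C\,\mathbb{E}\lvert X\rvert < \infty$ (so the means exist) and $\sum_{k\geqslant 1}\mathbb{P}\{\lvert X_k\rvert > k\} \leqslant C\sum_{k\geqslant1}\mathbb{P}\{\lvert X\rvert>k\}\leqslant C\,\mathbb{E}\lvert X\rvert<\infty$, while $X\in\mathscr{L}_1$ yields $\sum_{i\geqslant0}2^i\,\mathbb{P}\{\lvert X\rvert>2^i\}<\infty$; Cauchy--Schwarz with the hypothesis on $\{a_n\}$ gives $n^{-1}\sum_{k=1}^n\lvert a_k\rvert\leqslant(n^{-1}\sum_{k=1}^na_k^2)^{1/2}\leqslant C$ and $\sum_{k=1}^na_k^2\leqslant Cn$. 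Writing $Y_{nk}:=g_n(X_k)$, decompose
\[
S_n = \sum_{k=1}^n a_k(X_k-Y_{nk}) + \sum_{k=1}^n a_k(Y_{nk}-\mathbb{E}\,Y_{nk}) + \sum_{k=1}^n a_k(\mathbb{E}\,Y_{nk}-\mathbb{E}\,X_k) =: R_n^{(1)}+R_n^{(2)}+R_n^{(3)}.
\]
Since $\sum_k\mathbb{P}\{\lvert X_k\rvert>k\}<\infty$, Borel--Cantelli yields $X_k=Y_{nk}$ for every $k\leqslant n$ once $n$ is large, almost surely, whence $R_n^{(1)}/n\to0$ a.s.; and $\lvert R_n^{(3)}\rvert\leqslant\sum_{k=1}^n\lvert a_k\rvert\,\mathbb{E}[(\lvert X_k\rvert-n)^+]\leqslant C\,\mathbb{E}[(\lvert X\rvert-n)^+]\sum_{k=1}^n\lvert a_k\rvert$, so $R_n^{(3)}/n\to0$ because $\mathbb{E}[(\lvert X\rvert-n)^+]\to0$. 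Everything thus reduces to $R_n^{(2)}/n\to0$ a.s.

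For $R_n^{(2)}$ I would split into dyadic blocks $I_i:=\{n:2^i\leqslant n<2^{i+1}\}$ and, on $I_i$, replace the truncation level $n$ by the frozen level $2^{i+1}$. Since $\lvert g_n(x)-g_{2^{i+1}}(x)\rvert\leqslant 2^i\,I_{\{\lvert x\rvert>2^i\}}$ and $\lvert g_n(x)-g_{2^{i+1}}(x)\rvert\leqslant(\lvert x\rvert-2^i)^+$ for $n\in I_i$, the resulting error is, uniformly over $n\in I_i$, at most $2^i\sum_{k\leqslant2^{i+1}}\lvert a_k\rvert\,I_{\{\lvert X_k\rvert>2^i\}}+C\,2^i\,\mathbb{E}[(\lvert X\rvert-2^i)^+]$; the first term vanishes for all large $i$ a.s.\ (Borel--Cantelli, using $\sum_i2^i\mathbb{P}\{\lvert X\rvert>2^i\}<\infty$, so eventually no $k\leqslant2^{i+1}$ has $\lvert X_k\rvert>2^i$) and the second is $o(2^i)$. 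It therefore suffices to show that
\[
M_i := \max_{2^i\leqslant n<2^{i+1}}\Bigl\lvert\,\sum_{k=1}^n a_k\bigl(g_{2^{i+1}}(X_k)-\mathbb{E}\,g_{2^{i+1}}(X_k)\bigr)\Bigr\rvert
\]
satisfies $M_i/2^i\to0$ a.s., and for this I will prove $\sum_{i\geqslant0}4^{-i}\,\mathbb{E}\,M_i^2<\infty$ and then apply Chebyshev and Borel--Cantelli.

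To estimate $\mathbb{E}\,M_i^2$, write $a_k=a_k^+-a_k^-$ and note that, $g_{2^{i+1}}$ being nondecreasing, both $\{a_k^+(g_{2^{i+1}}(X_k)-\mathbb{E}\,g_{2^{i+1}}(X_k))\}_{k\leqslant2^{i+1}}$ and $\{a_k^-(g_{2^{i+1}}(X_k)-\mathbb{E}\,g_{2^{i+1}}(X_k))\}_{k\leqslant2^{i+1}}$ are pairwise PQD; applying the maximal inequality for pairwise PQD random variables of \cite{Lita18} to each and using $\mathrm{Cov}\bigl(g_{2^{i+1}}(X_k),g_{2^{i+1}}(X_j)\bigr)=G_{X_k,X_j}(2^{i+1})\geqslant0$ and $a_k^+a_j^++a_k^-a_j^-\leqslant\lvert a_ka_j\rvert$ gives
\[
\mathbb{E}\,M_i^2 \leqslant C\sum_{k\leqslant2^{i+1}}a_k^2\,\mathrm{Var}\bigl(g_{2^{i+1}}(X_k)\bigr) + C\!\!\sum_{1\leqslant k<j\leqslant2^{i+1}}\!\!\lvert a_ka_j\rvert\,G_{X_k,X_j}(2^{i+1}).
\]
For the first sum, $\mathrm{Var}(g_{2^{i+1}}(X_k))\leqslant\mathbb{E}[(\lvert X_k\rvert\wedge2^{i+1})^2]\leqslant C\,\mathbb{E}[(\lvert X\rvert\wedge2^{i+1})^2]=C\int_0^{2^{i+1}}2t\,\mathbb{P}\{\lvert X\rvert>t\}\,\mathrm{d}t$ and $\sum_{k\leqslant2^{i+1}}a_k^2\leqslant C\,2^{i+1}$, so multiplying by $4^{-i}$, summing in $i$ and interchanging the sum and the integral yields a bound of order $1+\mathbb{E}\lvert X\rvert<\infty$. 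For the second sum, interchange the summations to get $\sum_{1\leqslant k<j}\lvert a_ka_j\rvert\sum_{i:\,2^{i+1}\geqslant j}4^{-i}G_{X_k,X_j}(2^{i+1})$; since $G_{X_k,X_j}$ is nondecreasing, $4^{-i}G_{X_k,X_j}(2^{i+1})\leqslant C\int_{2^{i+1}}^{2^{i+2}}t^{-3}G_{X_k,X_j}(t)\,\mathrm{d}t$, and telescoping these dyadic integrals gives $\sum_{i:\,2^{i+1}\geqslant j}4^{-i}G_{X_k,X_j}(2^{i+1})\leqslant C\int_j^\infty t^{-3}G_{X_k,X_j}(t)\,\mathrm{d}t$. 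Hence the second sum is at most $C\sum_{1\leqslant k<j}\lvert a_ka_j\rvert\int_j^\infty t^{-3}G_{X_k,X_j}(t)\,\mathrm{d}t$, finite by \eqref{eq:2.2}. Thus $\sum_i4^{-i}\mathbb{E}\,M_i^2<\infty$, so $R_n^{(2)}/n\to0$ a.s.\ and the theorem follows.

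I expect the main difficulty to be the bookkeeping forced by the moving truncation level. Truncating $X_k$ at the fixed level $k$ does not work: the cross terms then carry $G_{X_k,X_j}(j)$ with a factor $\lvert a_ka_j\rvert\,k^{-1}j^{-1}$, which \eqref{eq:2.2} (having no $k^{-1}j^{-1}$) cannot absorb; the truncation must grow with $n$, and the delicate points are (i) showing the re-truncation errors over each dyadic block are uniformly negligible almost surely, and (ii) matching the dyadic series $\sum_i4^{-i}G_{X_k,X_j}(2^{i+1})$ to the integral $\int_j^\infty t^{-3}G_{X_k,X_j}(t)\,\mathrm{d}t$ appearing in \eqref{eq:2.2} --- this is precisely where the exponent $-3$ and the lower limit $j$ in the hypothesis originate. (One also needs $\int_j^\infty t^{-3}G_{X_k,X_j}(t)\,\mathrm{d}t<\infty$ for each pair, which holds because $X\in\mathscr{L}_1$ forces $G_{X_k,X_j}(t)\leqslant C\,\mathbb{E}[(\lvert X\rvert\wedge t)^2]=o(t)$.)
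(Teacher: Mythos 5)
Your decomposition into $R_n^{(1)},R_n^{(2)},R_n^{(3)}$, the re-truncation at the frozen level $2^{i+1}$ inside each dyadic block, and the matching of the dyadic series $\sum_{i}4^{-i}G_{X_k,X_j}(2^{i+1})$ with $\int_j^\infty t^{-3}G_{X_k,X_j}(t)\,\mathrm{d}t$ are all sound. The genuine gap is the inequality $\mathbb{E}\,M_i^2\leqslant C\bigl[\sum_{k\leqslant 2^{i+1}}a_k^2\,\mathrm{Var}(g_{2^{i+1}}(X_k))+\sum_{k<j\leqslant 2^{i+1}}\lvert a_ka_j\rvert\,G_{X_k,X_j}(2^{i+1})\bigr]$, which you attribute to a ``maximal inequality for pairwise PQD random variables'' from \cite{Lita18}. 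No Kolmogorov/Newman--Wright type $L^2$ bound for the maximum of partial sums is available under pairwise PQD: the Newman--Wright inequality rests on the demimartingale property, which requires (full) association, whereas pairwise PQD only constrains bivariate distributions. Lemma 1 of \cite{Lita18}, as it is used in the proof of Theorem~\ref{thr:2} here, is a different object: it bounds the series $\sum_n n^{-1}\,\mathbb{P}\{\max_{k\leqslant n}\lvert S_k\rvert>\varepsilon b_n\}$ by weighted sums of $\mathrm{Cov}(S_j,a_jX_j')$ accumulated over all $n$, and must then be combined with Theorem 2.1 of \cite{Yang08}; it does not deliver a per-block bound of the form $\mathbb{E}\max^2\leqslant C\,\mathrm{Var}$. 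Without such a bound, your Chebyshev--Borel--Cantelli step only yields convergence along the subsequence $n=2^i$, and filling in a block requires either a true maximal inequality (unavailable here) or monotonicity of the partial sums, which your centred summands $a_k(g_{2^{i+1}}(X_k)-\mathbb{E}\,g_{2^{i+1}}(X_k))$ do not have.

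Your closing claim that truncating $X_k$ at the fixed level $k$ ``does not work'' is also mistaken --- that is exactly what the paper does. Setting $X_k'=g_k(X_k)$ and splitting it into the nondecreasing nonnegative and nonpositive pieces $Y_k'=(k\wedge X_k)\vee 0$ and $Y_k''=(-k\vee X_k)\wedge 0$ (still positively correlated by Lemma 1 of \cite{Lehmann66}), one verifies the hypothesis of Walk's Tauberian strong law (Remark 3 of \cite{Walk05}) for the nonnegative summands $a_kY_k'$, namely $\sum_n n^{-3}\sum_{k,j\leqslant n}\mathrm{Cov}(a_kY_k',a_jY_j')<\infty$. The cross terms then enter with weight $\sum_{n\geqslant j}n^{-3}\asymp j^{-2}$, not $k^{-1}j^{-1}$, and since $t\mapsto G_{X_k,X_j}(t)$ is nondecreasing one has $j^{-2}G_{X_k,X_j}(j)\leqslant 2\int_j^\infty t^{-3}G_{X_k,X_j}(t)\,\mathrm{d}t$, so condition \eqref{eq:2.2} absorbs them (the paper passes through Lemma 4 of \cite{Louhichi00} for exactly this bookkeeping); the remainder $X_k''$ is disposed of by Borel--Cantelli and dominated convergence, just as in your $R_n^{(1)}$ and $R_n^{(3)}$. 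Working with nonnegative monotone summands is precisely what lets the paper avoid any maximal inequality for $p=1$, as it explicitly remarks. If you insist on your dyadic scheme, you would have to perform the same monotone decomposition to control the maximum over a block by its endpoints, at which point you have essentially reconstructed the Walk-type argument.
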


\begin{proof}
By writing $a_{n} = a_{n}^{+} - a_{n}^{-}$, we may suppose without loss of generality that $a_{n}$ is non-negative for each $n$. Setting
\begin{equation}\label{eq:2.3}
\begin{gathered}
X_{n}' := g_{n}(X_{n}), \\
X_{n}'' := X_{n} - g_{n}(X_{n}), \\
Y_{n}' := (n \wedge X_{n}) \vee 0, \\
Y_{n}'' := (-n \vee X_{n}) \wedge 0
\end{gathered}
\end{equation}
we have $X_{n}' = Y_{n}' + Y_{n}''$ and $X_{n} = X_{n}' + X_{n}''$. From Lemma 1 of \citealt{Lehmann66} we get
\begin{gather*}
\mathrm{Cov}(Y_{k}',Y_{j}'') \geqslant 0, \\
\mathrm{Cov}(Y_{k}'',Y_{j}') \geqslant 0, \\
\mathrm{Cov}(Y_{k}'',Y_{j}'') \geqslant0
\end{gather*}
because $t \mapsto (n \wedge t) \vee 0$ and $t \mapsto (-n \vee t) \wedge 0$ are nondecreasing functions. Thus,
\begin{align*}
&\sum_{k,j=1}^{n} \mathrm{Cov}(a_{k} Y_{k}',a_{j} Y_{j}') = \\
&\qquad = \sum_{k,j=1}^{n} a_{k} a_{j} \mathrm{Cov}(Y_{k}',Y_{j}') \\
&\qquad \leqslant \sum_{k,j=1}^{n} a_{k} a_{j} \left[\mathrm{Cov}(Y_{k}',Y_{j}') + \mathrm{Cov}(Y_{k}',Y_{j}'') + \mathrm{Cov}(Y_{k}'',Y_{j}') + \mathrm{Cov}(Y_{k}'',Y_{j}'') \right] \\
&\qquad = \sum_{k,j=1}^{n} a_{k} a_{j} \mathrm{Cov}(X_{k}',X_{j}') \\
&\qquad = \sum_{k=1}^{n} a_{k}^{2} \mathbb{V}(X_{k}') + 2 \sum_{1 \leqslant k < j \leqslant n} a_{k} a_{j} \mathrm{Cov}(X_{k}',X_{j}').
\end{align*}
According to Abel's identity, we have
\begin{equation*}
\sum_{k=j}^{\infty} \frac{a_{k}^{2}}{k^{2}} \leqslant \frac{2}{j} \cdot \sup_{k \geqslant 1} \frac{1}{k} \sum_{m=1}^{k} a_{m}^{2} \leqslant \frac{C}{j}
\end{equation*}
for any $j \geqslant 1$, so that Lemma 1 of \citealt{Lita15} implies
\begin{align*}
\sum_{n=1}^{\infty} \frac{1}{n^{3}} \sum_{k=1}^{n} a_{k}^{2} \mathbb{V}(X_{k}') &= \sum_{k=1}^{\infty} \sum_{n=k}^{\infty} \frac{a_{k}^{2} \mathbb{V}(X_{k}')}{n^{3}} \\
&\leqslant C \sum_{k=1}^{\infty} \frac{a_{k}^{2} \mathbb{V}(X_{k}')}{k^{2}} \\
&\leqslant C \sum_{k=1}^{\infty} \frac{a_{k}^{2} \mathbb{E} \lvert X_{k}' \rvert^{2}}{k^{2}} \\
&\leqslant C \sum_{k=1}^{\infty} \frac{a_{k}^{2} \left(\mathbb{E} \, X_{k}^{2} I_{\left\{\lvert X_{k} \rvert \leqslant k \right\}} + k^{2} \mathbb{P} \left\{\lvert X_{k} \rvert > k \right\} \right)}{k^{2}} \\
&\leqslant C \sum_{k=1}^{\infty} \frac{a_{k}^{2} \left(\mathbb{E} \, X^{2} I_{\left\{\lvert X \rvert \leqslant k \right\}} + k^{2} \mathbb{P} \left\{\lvert X \rvert > k \right\} \right)}{k^{2}} \\
&= C \sum_{k=1}^{\infty} \frac{a_{k}^{2}}{k^{2}} \int_{0}^{k} u \, \mathbb{P} \left\{\abs{X} > u \right\} \mathrm{d}u \\
&= C \int_{0}^{\infty} u \, \mathbb{P} \left\{\abs{X} > u \right\} \sum_{\left\{k \colon k > u \right\}} \frac{a_{k}^{2}}{k^{2}} \, \mathrm{d}u \\
&\leqslant C \int_{0}^{\infty} \mathbb{P} \left\{\abs{X} > u \right\} \mathrm{d}u \\
&= C \, \mathbb{E} \abs{X} < \infty.
\end{align*}
Since
\begin{equation}\label{eq:2.4}
\begin{split}
& \sum_{n=1}^{\infty} \frac{1}{n^{3}} \sum_{1 \leqslant k < j \leqslant n} a_{k} a_{j} \mathrm{Cov}(X_{k}',X_{j}')
= \\
&\qquad =\sum_{n=1}^{\infty} \frac{1}{n^{3}} \sum_{1 \leqslant k < j \leqslant n} a_{k} a_{j} \int_{-k}^{k} \int_{-j}^{j} \Delta_{X_{k},X_{j}}(x,y) \, \mathrm{d}x \mathrm{d}y \\
&\qquad \leqslant \sum_{n=1}^{\infty} \sum_{1 \leqslant k < j \leqslant n} \frac{a_{k} a_{j} G_{X_{k},X_{j}}(n)}{n^{3}} \\
&\qquad= \sum_{1 \leqslant k < j \leqslant \infty} a_{k} a_{j} \int_{-\infty}^{\infty} \int_{-\infty}^{\infty} \sum_{n=1}^{\infty} \frac{I_{\left\{n \geqslant \abs{x} \vee \abs{y} \vee j \right\}}}{n^{3}} \Delta_{X_{k},X_{j}}(x,y) \, \mathrm{d} x \mathrm{d} y \\
&\qquad \leqslant C \sum_{1 \leqslant k < j \leqslant \infty} a_{k} a_{j} \int_{-\infty}^{\infty} \int_{-\infty}^{\infty} \frac{\Delta_{X_{k},X_{j}}(x,y)}{\left(\abs{x} \vee \abs{y} \vee j \right)^{2}} \, \mathrm{d} x \mathrm{d} y \\
&\qquad = C \sum_{1 \leqslant k < j \leqslant \infty} a_{k} a_{j} \int_{j}^{\infty} t^{-3} G_{X_{k},X_{j}}(t) \, \mathrm{d} t < \infty
\end{split}
\end{equation}
by Lemma 4 of \citealt{Louhichi00}, we obtain
\begin{equation*}
\sum_{n=1}^{\infty} \frac{1}{n^{3}} \sum_{k,j=1}^{n} \mathrm{Cov}(a_{k} Y_{k}',a_{j} Y_{j}') \leqslant \sum_{n=1}^{\infty} \frac{a_{k}^{2}}{n^{3}} \sum_{k=1}^{n} \mathbb{V}(X_{k}') + 2 \sum_{n=1}^{\infty} \frac{1}{n^{3}} \sum_{1 \leqslant k < j \leqslant n} a_{k} a_{j} \mathrm{Cov}(X_{k}',X_{j}') < \infty.
\end{equation*}
On the other hand, $a_{n} Y_{n}' \geqslant 0$ and Lemma 1 of \citealt{Lita15} yields
\begin{align*}
\frac{1}{n} \sum_{k=1}^{n} \mathbb{E}(a_{k} Y_{k}') &\leqslant \frac{1}{n} \sum_{k=1}^{n} a_{k} \mathbb{E} \lvert X_{k}' \rvert \\
&\leqslant \frac{C}{n} \sum_{k=1}^{n} a_{k} \left(\mathbb{E} \lvert X \rvert I_{\left\{\lvert X \rvert \leqslant k \right\}} + k \mathbb{P} \left\{\lvert X \rvert > k \right\} \right) \\
&\leqslant \frac{C \, \mathbb{E} \lvert X \rvert}{n} \sum_{k=1}^{n} a_{k} \\
&\leqslant C \, \mathbb{E} \lvert X \rvert \left(\frac{1}{n} \sum_{k=1}^{n} a_{k}^{2} \right)^{1/2} \\
&\leqslant C \, \mathbb{E} \lvert X \rvert < \infty
\end{align*}
which ensures
\begin{equation}\label{eq:2.5}
\frac{1}{n} \sum_{k=1}^{n} a_{k} (Y_{k}' - \mathbb{E} \, Y_{k}') \overset{\textnormal{a.s.}}{\longrightarrow} 0
\end{equation}
(see, for instance, Remark 3 of \citealt{Walk05}). Noting that $a_{n} Y_{n}'' \leqslant 0$, $\sum_{k=1}^{n} \mathbb{E}(-a_{k} Y_{k}'')/n \leqslant \sum_{k=1}^{n} a_{k} \mathbb{E} \lvert X_{k}' \rvert /n < \infty$ and

\begin{align*}
\sum_{k,j=1}^{n} \mathrm{Cov}(- a_{k} Y_{k}'', - a_{k} Y_{j}'') = \sum_{k,j=1}^{n} \mathrm{Cov}(a_{k} Y_{k}'', a_{k} Y_{j}'') \leqslant \sum_{k,j=1}^{n} a_{k} a_{j} \mathrm{Cov}(X_{k}',X_{j}')
\end{align*}
one can argue as above to conclude that
\begin{equation}\label{eq:2.6}
\frac{1}{n} \sum_{k=1}^{n} a_{k} \left(\mathbb{E} \, Y_{k}'' - Y_{k}'' \right) \overset{\textnormal{a.s.}}{\longrightarrow} 0.
\end{equation}
Hence, \eqref{eq:2.5} and \eqref{eq:2.6} yield $\sum_{k=1}^{n} a_{k} (X_{k}' - \mathbb{E} \, X_{k}')/n \overset{\textnormal{a.s.}}{\longrightarrow} 0$. It remains to prove
\begin{equation}\label{eq:2.7}
\frac{1}{n} \sum_{k=1}^{n} a_{k} (X_{k}'' - \mathbb{E} \, X_{k}'') \overset{\textnormal{a.s.}}{\longrightarrow} 0.
\end{equation}
Since
\begin{equation*}
\sum_{n=1}^{\infty} \mathbb{P} \left\{X_{n} \neq X_{n}' \right\} = \sum_{n=1}^{\infty} \mathbb{P} \left\{\lvert X_{n} \rvert > n \right\} \leqslant C \sum_{n=1}^{\infty} \mathbb{P} \left\{\lvert X \rvert > n \right\} \leqslant C \, \mathbb{E} \lvert X \rvert < \infty
\end{equation*}
it follows that a.s. $X_{n} = X_{n}'$ for all but a finite number of values of $n$, entailing
\begin{equation*}
\frac{1}{n} \sum_{k = 1}^{n} a_{k} X_{k}'' \overset{\textnormal{a.s.}}{\longrightarrow} 0.
\end{equation*}
From the dominated convergence theorem we have $\mathbb{E} \lvert X \rvert I_{\left\{\lvert X \rvert > n \right\}} = o(1)$ as $n \rightarrow \infty$, implying
\begin{align*}
\limsup_{n \rightarrow \infty} \abs{\frac{1}{n} \sum_{k = 1}^{n} \mathbb{E} (a_{k} X_{k}'')} &\leqslant \limsup_{n \rightarrow \infty} \frac{1}{n} \sum_{k = 1}^{n} a_{k} \mathbb{E} \lvert X_{k}'' \rvert \\
&\leqslant \limsup_{n \rightarrow \infty} \frac{1}{n} \sum_{k = 1}^{n} a_{k} \mathbb{E} \lvert X_{k} \rvert I_{\left\{\lvert X_{k} \rvert > k \right\}} \\
&\leqslant \limsup_{n \rightarrow \infty} \left(\frac{1}{n} \sum_{k=1}^{n} a_{k}^{2} \right)^{1/2} \left(\frac{1}{n} \sum_{k=1}^{n} \mathbb{E}^{2} \lvert X_{k} \rvert I_{\left\{\lvert X_{k} \rvert > k \right\}} \right)^{1/2} \\
&\leqslant \limsup_{n \rightarrow \infty} C \left(\frac{1}{n} \sum_{k=1}^{n} \mathbb{E}^{2} \lvert X \rvert I_{\left\{\lvert X \rvert > k \right\}} \right)^{1/2} \\
&= 0
\end{align*}
and \eqref{eq:2.7} holds establishing the thesis.
\end{proof}

\begin{remark}
We observe that condition \eqref{eq:2.2} can be replaced by the weaker condition
\begin{equation}\label{eq:2.8}
\sum_{1 \leqslant k < j \leqslant \infty} \frac{\lvert a_{k} a_{j} \rvert}{j^{2}} \int_{-k}^{k} \int_{-j}^{j} \Delta_{X_{k},X_{j}}(x,y) \, \mathrm{d}x \mathrm{d}y < \infty
\end{equation}
by waiving Lemma 4 of \citealt{Louhichi00} in the upper bound \eqref{eq:2.4}. When $a_{n} = 1$ for all $n$, Theorem~\ref{thr:1} equipped with \eqref{eq:2.8} instead of \eqref{eq:2.2} extends Corollary 1 of \citealt{Lita18} to $p = 1$ and it corresponds to Theorem 3 of \citealt{Chen19}.
\end{remark}

Our Theorem~\ref{thr:1} extends also Theorem 1 of \citealt{Louhichi00} to sequences of pairwise PQD random variables when $p = 1$. Recall that positively quadrant dependent random variables are not necessarily (positively) associated (see, for instance, \citealt{Esary67} or \citealt{Tong80}). Furthermore, the normalising constants in Theorem~\ref{thr:1} improve the considered ones in Theorem 2 of \citealt{Lita18} for $p = 1$. It is worthy to note that for the special case $p = 1$, the previous approach leads to sharped results discarding the direct use of any maximal inequality which is, in fact, the key ingredient in both \citealt{Louhichi00} and \citealt{Lita18}. In particular, Theorem~\ref{thr:1} with weights $a_{n} = 1$ for all $n$ does not require the finiteness of the variance in each random variable unlike Theorem 1 of \citealt{Birkel89}.

The statement below gives us the almost sure convergence for weighted sums of pairwise PQD random variables when the moment condition of the random variable $X$ in Theorem~\ref{thr:1} is strengthened.

\begin{theorem}\label{thr:2}
Let $1 < p < 2$ and $\{X_{n}, \, n \geqslant 1 \}$ be a sequence of pairwise PQD random variables stochastically dominated by a random variable $X \in \mathscr{L}_{p}$. If $\{a_{n} \}$ is a sequence of constants satisfying $\sup_{n \geqslant 1} n^{-1} \sum_{k=1}^{n} a_{k}^{2} < \infty$ and
\begin{equation}\label{eq:2.9}
\sum_{1 \leqslant k < j \leqslant \infty} \lvert a_{k} a_{j} \rvert \int_{\frac{j^{1/p}}{\Log^{2/p} j}}^{\infty} \frac{G_{X_{k},X_{j}}(t)}{t^{3} \Log^{2} \, t} \, \mathrm{d}t < \infty,
\end{equation}
then $\sum_{k = 1}^{n} a_{k}(X_{k} - \mathbb{E} \, X_{k})/(n^{1/p} \Log^{2(p - 1)/p} \, n) \overset{\textnormal{a.s.}}{\longrightarrow} 0$.
\end{theorem}

\begin{proof}
As in the proof of Theorem~\ref{thr:1}, we shall assume $a_{n} \geqslant 0$ for all $n$. Considering $X_{n}' := g_{n^{1/p}/\Log^{2/p}n}(X_{n})$ and $X_{n}'' := X_{n} - g_{n^{1/p}/ \Log^{2/p}n}(X_{n})$ it follows that $\{a_{n} X_{n}', \, n \geqslant 1 \}$ is a sequence of pairwise PQD random variables. From Lemma 1 of \citealt{Lita18} we obtain, for each $\varepsilon > 0$ and a fixed $n_{0}$,
\begin{align*}
& \sum_{n=n_{0}}^{\infty} \frac{1}{n} \mathbb{P} \left\{\max_{1 \leqslant k \leqslant n} \abs{\sum_{j=1}^{k} a_{j}(X_{j}' - \mathbb{E} \, X_{j}')} > \varepsilon  n^{1/p} \Log^{2(p - 1)/p} \, n \right\} \leqslant \\
&\leqslant \frac{C}{\varepsilon^{2}} \sum_{n=n_{0}}^{\infty} \sum_{j=1}^{n} \frac{\Log^{2(2 - p)/p}n}{n^{1 + 2/p}} \mathrm{Cov}\left[\sum_{i=1}^{j} a_{i} (X_{i}' - \mathbb{E} \, X_{i}'),a_{j}(X_{j}' - \mathbb{E} \, X_{j}') \right] \\
&\leqslant \frac{C}{\varepsilon^{2}} \sum_{n=n_{0}}^{\infty} \sum_{j=1}^{n} \frac{\Log^{2(2 - p)/p}n \, \mathbb{E} (a_{j} X_{j}')^{2}}{n^{1 + 2/p}} + \frac{C}{\varepsilon^{2}} \sum_{n=n_{0}}^{\infty} \sum_{1 \leqslant i < j \leqslant n} \frac{\Log^{2(2 - p)/p}n \, \mathrm{Cov}(a_{i} X_{i}',a_{j} X_{j}')}{n^{1 + 2/p}} \\
&\leqslant \frac{C}{\varepsilon^{2}} \sum_{n=n_{0}}^{\infty} \sum_{j=1}^{n} \frac{a_{j}^{2} \, \Log^{2(2 - p)/p}n}{n^{1 + 2/p}} \left(\mathbb{E} \, X_{j}^{2} I_{\left\{\lvert X_{j} \rvert \leqslant \frac{j^{1/p}}{\Log^{2/p} j} \right\}} + \frac{j^{2/p}}{\Log^{4/p} \, j} \mathbb{P} \left\{\lvert X_{j} \rvert > \frac{j^{1/p}}{\Log^{2/p} j} \right\} \right) + \\
& \qquad \frac{C}{\varepsilon^{2}} \sum_{n=n_{0}}^{\infty} \sum_{1 \leqslant i < j \leqslant n} \frac{a_{i} a_{j} \, \Log^{2(2 - p)/p}n}{n^{1 + 2/p}} \int_{-i^{1/p}/\Log^{2/p} i}^{i^{1/p}/\Log^{2/p} i} \int_{-j^{1/p}/\Log^{2/p} j}^{j^{1/p}/\Log^{2/p} j} \Delta_{X_{i},X_{j}}(x,y) \, \mathrm{d}x \mathrm{d}y \\
&\leqslant \frac{C}{\varepsilon^{2}} \sum_{n=n_{0}}^{\infty} \sum_{j=1}^{n} \frac{a_{j}^{2} \, \Log^{2(2 - p)/p}n}{n^{1 + 2/p}} \left(\mathbb{E} \, X_{j}^{2} I_{\left\{\lvert X_{j} \rvert \leqslant \frac{n^{1/p}}{\Log^{2/p} n} \right\}} + \frac{n^{2/p}}{\Log^{4/p} \, n} \mathbb{P} \left\{\lvert X_{j} \rvert > \frac{n^{1/p}}{\Log^{2/p} n} \right\} \right) + \\
& \qquad \frac{C}{\varepsilon^{2}} \sum_{n=n_{0}}^{\infty} \sum_{1 \leqslant i < j \leqslant n} \frac{a_{i} a_{j} \, \Log^{2(2 - p)/p}n}{n^{1 + 2/p}} \int_{-n^{1/p}/\Log^{2/p} n}^{n^{1/p}/\Log^{2/p} n} \int_{-n^{1/p}/\Log^{2/p} n}^{n^{1/p}/\Log^{2/p} n} \Delta_{X_{i},X_{j}}(x,y) \, \mathrm{d}x \mathrm{d}y \\
&\leqslant \frac{C}{\varepsilon^{2}} \sum_{n=1}^{\infty} \sum_{j=1}^{n} \frac{a_{j}^{2} \, \Log^{2(2 - p)/p}n}{n^{1 + 2/p}} \left(\mathbb{E} \, X^{2} I_{\left\{\abs{X} \leqslant \frac{n^{1/p}}{\Log^{2/p} n} \right\}} + \frac{n^{2/p}}{\Log^{4/p} \, n} \mathbb{P} \left\{\abs{X} > \frac{n^{1/p}}{\Log^{2/p} n} \right\} \right) + \\
& \qquad \frac{C}{\varepsilon^{2}} \sum_{n=1}^{\infty} \sum_{1 \leqslant i < j \leqslant n} \frac{a_{i} a_{j} \Log^{2(2 - p)/p}n}{n^{1 + 2/p}} G_{X_{i},X_{j}} \left(\frac{n^{1/p}}{\Log^{2/p} n} \right) \\
&\leqslant \frac{C}{\varepsilon^{2}} \sum_{n=1}^{\infty} \frac{\Log^{2(2 - p)/p}n}{n^{2/p}} \mathbb{E} \, X^{2} I_{\left\{\abs{X} \leqslant \frac{n^{1/p}}{\Log^{2/p} n} \right\}} + \frac{C}{\varepsilon^{2}} \sum_{n=1}^{\infty} \frac{1}{\Log^{2} \, n} \mathbb{P} \left\{\abs{X} > \frac{n^{1/p}}{\Log^{2/p} n} \right\} + \\
& \qquad \frac{C}{\varepsilon^{2}} \sum_{n=1}^{\infty} \sum_{1 \leqslant i < j \leqslant n} \frac{a_{i} a_{j} \, \Log^{2(2 - p)/p}n}{n^{1 + 2/p}} G_{X_{i},X_{j}} \left(\frac{n^{1/p}}{\Log^{2/p} n} \right)
\end{align*}
since $\sup_{n \geqslant 1} \sum_{j=1}^{n} a_{j}^{2}/n < \infty$. Supposing
\begin{equation}\label{eq:2.10}
A_{j} = \left\{(j - 1)^{1/p}/\Log^{2/p} (j - 1) < \abs{X} \leqslant j^{1/p}/\Log^{2/p} j \right\}, \quad j \geqslant 1
\end{equation}
we have
\begin{align*}
\sum_{n=1}^{\infty} \frac{\Log^{2(2 - p)/p} n}{n^{2/p}} \, \mathbb{E} \, X^{2} I_{\left\{\lvert X \rvert \leqslant \frac{n^{1/p}}{\Log^{2/p} n} \right\}} &= \sum_{n=1}^{\infty} \sum_{j=1}^{n} \frac{\Log^{2(2 - p)/p} n}{n^{2/p}} \, \mathbb{E} \, X^{2} I_{A_{j}} \\
& = \sum_{j=1}^{\infty} \sum_{n=j}^{\infty} \frac{\Log^{2(2 - p)/p} n}{n^{2/p}} \, \mathbb{E} \, X^{2} I_{A_{j}} \\
& \leqslant C \sum_{j=1}^{\infty} \frac{\Log^{2(2 - p)/p} j}{j^{2/p - 1}} \, \mathbb{E} \, X^{2} I_{A_{j}} \\
& \leqslant C \sum_{j=1}^{\infty} \, \mathbb{E} \abs{X}^{p} I_{A_{j}} \\
& = C \, \mathbb{E} \abs{X}^{p} < \infty
\end{align*}
and
\begin{equation*}
\sum_{n=1}^{\infty} \frac{1}{\Log^{2} n} \mathbb{P} \left\{\abs{X} > \frac{n^{1/p}}{\Log^{2/p} n}  \right\} = \sum_{n=1}^{\infty} \frac{1}{\Log^{2} n} \, \mathbb{P} \left\{\abs{X}^{p} > \frac{n}{\Log^{2} n} \right\} \leqslant C \, \mathbb{E} \abs{X}^{p} < \infty.
\end{equation*}
In order to prove
\begin{equation}\label{eq:2.11}
\sum_{n=1}^{\infty} \sum_{1 \leqslant i < j \leqslant n} \frac{a_{i} a_{j} \, \Log^{2(2 - p)/p}n}{n^{1 + 2/p}} G_{X_{i},X_{j}} \left(\frac{n^{1/p}}{\Log^{2/p} n} \right) < \infty
\end{equation}
we have
\begin{equation}\label{eq:2.12}
\begin{split}
& \sum_{n=1}^{\infty} \sum_{1 \leqslant i < j \leqslant n} \frac{a_{i} a_{j} \, \Log^{2(2 - p)/p}n}{n^{1 + 2/p}} G_{X_{i},X_{j}} \left(\frac{n^{1/p}}{\Log^{2/p} n} \right) = \\
&\sum_{1 \leqslant i < j \leqslant \infty} a_{i} a_{j} \int_{-\infty}^{\infty} \int_{-\infty}^{\infty} \sum_{n=1}^{\infty} \frac{\Log^{2(2 - p)/p}n}{n^{1 + 2/p}} \, I_{\left\{\frac{n^{1/p}}{\Log^{2/p} \, n} \geqslant \abs{x} \right\}} \, I_{\left\{\frac{n^{1/p}}{\Log^{2/p} \, n} \geqslant \abs{y} \right\}} I_{\left\{n \geqslant j \right\}} \Delta_{X_{i},X_{j}}(x,y) \, \mathrm{d}x \mathrm{d}y.
\end{split}
\end{equation}
Since $p^{2} t^{p} \Log^{2} \, t$ is an asymptotic inverse of $t^{1/p}/\Log^{2/p} t$ (see \citealt{Bingham87}, page $28$), we get
\begin{equation}\label{eq:2.13}
\begin{split}
& \sum_{n=1}^{\infty} \frac{\Log^{2(2 - p)/p}n}{n^{1 + 2/p}} \, I_{\left\{\frac{n^{1/p}}{\Log^{2/p} \, n} \geqslant \abs{x} \right\}} \, I_{\left\{\frac{n^{1/p}}{\Log^{2/p} \, n} \geqslant \abs{y} \right\}} I_{\left\{n \geqslant j \right\}} \leqslant \\
&\qquad \leqslant \sum_{n=1}^{\infty} \frac{\Log^{2(2 - p)/p}n}{n^{1 + 2/p}} \, I_{\left\{C(p) n \geqslant p^{2} \abs{x}^{p} \Log^{2} \abs{x} \right\}} \, I_{\left\{C(p) n \geqslant p^{2} \abs{y}^{p} \Log^{2} \abs{y} \right\}} I_{\left\{n \geqslant j \right\}} \\
&\qquad = \sum_{n=1}^{\infty} \frac{\Log^{2(2 - p)/p}n}{n^{1 + 2/p}} \, I_{\left\{n \geqslant \frac{p^{2} \abs{x}^{p} \Log^{2} \abs{x}}{C(p)} \vee \frac{p^{2} \abs{y}^{p} \Log^{2} \abs{y}}{C(p)} \vee j \right\}} \\
&\qquad \leqslant C(p) \cdot \frac{\Log^{2(2 - p)/p} \left[\frac{p^{2} \abs{x}^{p} \Log^{2} \abs{x}}{C(p)} \vee \frac{p^{2} \abs{y}^{p} \Log^{2} \abs{y}}{C(p)} \vee j \right]}{\left[\frac{p^{2} \abs{x}^{p} \Log^{2} \abs{x}}{C(p)} \vee \frac{p^{2} \abs{y}^{p} \Log^{2} \abs{y}}{C(p)} \vee j \right]^{2/p}}.
\end{split}
\end{equation}
Putting
\begin{gather*}
m = \sup_{j \geqslant 1} \frac{\Log^{2(2 - p)/p} j}{j^{2/p}}, \\
u(t) = \frac{\Log^{2(2 - p)/p} \left[\frac{p^{2} t^{p} \Log^{2} \, t}{C(p)} \right]}{t^{2} \Log^{4/p} t}
\end{gather*}
it follows
\begin{gather*}
u(t) \sim \frac{p^{2(2 - p)/p}}{t^{2} \Log^{2} \, t}, \quad t \rightarrow \infty, \\
u(t) \sim \frac{1}{t^{2} \Log^{2} \, t}, \quad t \rightarrow 0^{+}
\end{gather*}
and there is a constant $M(p) > 1$ such that
\begin{equation*}
\sup_{t > 0} \frac{u(t)}{\frac{1}{t^{2} \Log^{2} t}} \leqslant M(p) < \infty.
\end{equation*}
Hence, for all $x \neq 0$ and $y \neq 0$,
\begin{equation}\label{eq:2.14}
\begin{split}
&\frac{\Log^{2(2 - p)/p} \left[\frac{p^{2} \abs{x}^{p} \Log^{2} \abs{x}}{C(p)} \vee \frac{p^{2} \abs{y}^{p} \Log^{2} \abs{y}}{C(p)} \vee j \right]}{\left[\frac{p^{2} \abs{x}^{p} \Log^{2} \abs{x}}{C(p)} \vee \frac{p^{2} \abs{y}^{p} \Log^{2} \abs{y}}{C(p)} \vee j \right]^{2/p}} = \\
&\qquad = \int_{0}^{m} I_{\left\{t \leqslant u(\lvert x \rvert) \right\}} I_{\left\{t \leqslant u(\lvert y \rvert) \right\}} I_{\left\{t \leqslant \frac{\Log^{2(2 - p)/p}j}{j^{2/p}} \right\}} \, \mathrm{d}t \\
&\qquad \leqslant \int_{0}^{m} I_{\left\{t \leqslant \frac{M(p)}{x^{2} \Log^{2} \abs{x}} \right\}} I_{\left\{t \leqslant \frac{M(p)}{y^{2} \Log^{2} \abs{y}} \right\}} I_{\left\{t \leqslant \frac{\Log^{2(2 - p)/p}j}{j^{2/p}} \right\}} \, \mathrm{d}t \\
&\qquad = \int_{0}^{m} I_{\left\{\lvert x \rvert \leqslant v^{-1}(t) \right\}} I_{\left\{\lvert y \rvert \leqslant v^{-1}(t) \right\}} I_{\left\{t \leqslant \frac{\Log^{2(2 - p)/p}j}{j^{2/p}} \right\}} \, \mathrm{d}t
\end{split}
\end{equation}
where $v^{-1}(t)$ denotes the inverse of $v(t) = M(p)/(t^{2} \Log^{2} \, t)$, $t > 0$ and according to Fubini's theorem, we obtain
\begin{align}
&\int_{-\infty}^{\infty} \int_{-\infty}^{\infty} \sum_{n=1}^{\infty} \frac{\Log^{2(2 - p)/p}n}{n^{1 + 2/p}} \, I_{\left\{\frac{n^{1/p}}{\Log^{2/p} \, n} \geqslant \abs{x} \right\}} \, I_{\left\{\frac{n^{1/p}}{\Log^{2/p} \, n} \geqslant \abs{y} \right\}} I_{\left\{n \geqslant j \right\}} \Delta_{X_{i},X_{j}}(x,y) \, \mathrm{d}x \mathrm{d}y \notag \\
&\qquad \leqslant C(p) \int_{0}^{m} I_{\left\{t \leqslant \frac{\Log^{2(2 - p)/p}j}{j^{2/p}} \right\}} \, G_{X_{i},X_{j}}\left[v^{-1}(t) \right] \mathrm{d}t \notag \\
&\qquad = C(p) \int_{0}^{\frac{\Log^{2(2 - p)/p}j}{j^{2/p}}} G_{X_{i},X_{j}}\left[v^{-1}(t) \right] \mathrm{d}t \notag \\
&\qquad = - C(p) \int_{v^{-1}\left(\frac{\Log^{2(2 - p)/p}j}{j^{2/p}} \right)}^{\infty} v'(s) G_{X_{i},X_{j}}(s) \, \mathrm{d}s \notag \\
&\qquad \leqslant C(p) \int_{v^{-1}\left(\frac{\Log^{2(2 - p)/p}j}{j^{2/p}} \right)}^{\infty} \left[\frac{1}{s^{3} \Log^{2} \, s} + \frac{1}{(s \vee \mathrm{e}) s^{2} \Log^{3} \, s} \right]  G_{X_{i},X_{j}}(s) \, \mathrm{d}s \label{eq:2.15} \\
&\qquad = C(p) \int_{\frac{p \sqrt{M(p)} j^{1/p}}{\Log^{2/p} j} + o \left(\frac{j^{1/p}}{\Log^{2/p} j} \right)}^{\infty} \left[\frac{1}{s^{3} \Log^{2} \, s} + \frac{1}{(s \vee \mathrm{e}) s^{2} \Log^{3} \, s} \right]  G_{X_{i},X_{j}}(s) \, \mathrm{d}s \notag \\
&\qquad \leqslant C(p) \int_{\frac{p \sqrt{M(p)} j^{1/p}}{\Log^{2/p} j} + o \left(\frac{j^{1/p}}{\Log^{2/p} j} \right)}^{\infty} \frac{G_{X_{i},X_{j}}(s)}{s^{3} \Log^{2} \, s} \, \mathrm{d}s \notag \\
&\qquad \leqslant C(p) \int_{\frac{j^{1/p}}{\Log^{2/p} j}}^{\infty} \frac{G_{X_{i},X_{j}}(s)}{s^{3} \Log^{2} \, s} \, \mathrm{d}s \notag
\end{align}
for $j$ large enough, because
\begin{equation*}
v^{-1}(t) \sim \frac{\sqrt{M(p)}}{\sqrt{t} \abs{\log (\sqrt{t} \wedge \mathrm{e})}}, \quad t \rightarrow 0^{+}
\end{equation*}
and $s \mapsto G_{X_{i},X_{j}}(s)$ is a nonnegative and nondecreasing function. Thus, gathering \eqref{eq:2.12}, \eqref{eq:2.13}, \eqref{eq:2.14} and \eqref{eq:2.15} we obtain \eqref{eq:2.11} by using \eqref{eq:2.9}. Hence,
\begin{equation*}
\sum_{n=1}^{\infty} \frac{1}{n} \mathbb{P} \left\{\max_{1 \leqslant k \leqslant n} \abs{\sum_{j=1}^{k} a_{j}(X_{j}' - \mathbb{E} \, X_{j}')} > \varepsilon  n^{1/p} \Log^{2(p - 1)/p} \, n \right\} < \infty
\end{equation*}
and Theorem 2.1 of \citealt{Yang08} yields $\sum_{k = 1}^{n} a_{k}(X_{k}' - \mathbb{E} \, X_{k}')/(n^{1/p} \Log^{2(p - 1)/p} \, n) \overset{\textnormal{a.s.}}{\longrightarrow} 0$. It remains to show
\begin{equation}\label{eq:2.16}
\frac{1}{n^{1/p} \Log^{2(p - 1)/p} \, n} \sum_{k = 1}^{n} a_{k}(X_{k}'' - \mathbb{E} \, X_{k}'') \overset{\textnormal{a.s.}}{\longrightarrow} 0.
\end{equation}
By virtue of the Kronecker's lemma, convergence \eqref{eq:2.16} holds if
\begin{equation}\label{eq:2.17}
\sum_{k=1}^{\infty} \frac{1}{k^{1/p} \Log^{2(p - 1)/p} \, k} \lvert a_{k} (X_{k}'' - \mathbb{E} \, X_{k}'') \rvert < \infty \quad \text{a.s.}
\end{equation}
Since, for all $k \geqslant 3$,
\begin{align*}
&\frac{1}{k^{1/p} \Log^{2(p - 1)/p} \, k} - \frac{1}{(k + 1)^{1/p} \Log^{2(p - 1)/p} (k + 1)} \\
&\quad = \int_{k}^{k+1} \frac{\Log^{(2 - 2p)/p} x + 2(p - 1) \Log^{(2 - 3p)/p} x}{p x^{1 + 1/p}} \, \mathrm{d}x \\
&\quad \leqslant \frac{2p - 1}{p} \int_{k}^{k+1} \frac{\Log^{(2 - 2p)/p} x}{x^{1 + 1/p}} \, \mathrm{d}x \\
&\quad \leqslant \frac{2p - 1}{p} \cdot \frac{1}{k^{1 + 1/p}{\Log^{2(p - 1)/p} k}}
\end{align*}
it follows
\begin{align*}
&\sum_{k=3}^{\infty} \frac{1}{k^{1/p} \Log^{2(p - 1)/p} \, k} \mathbb{E} \lvert a_{k} (X_{k}'' - \mathbb{E} \, X_{k}'') \rvert \\
&\quad \leqslant 2 \sum_{k=3}^{\infty} \frac{1}{k^{1/p} \Log^{2(p - 1)/p} \, k} \mathbb{E} \lvert a_{k} X_{k}'' \rvert \\
&\quad \leqslant 2 \sum_{k=3}^{\infty} \frac{1}{k^{1/p} \Log^{2(p - 1)/p} \, k} \mathbb{E} \lvert a_{k} X_{k} \rvert I_{\left\{\lvert X_{k} \rvert > k^{1/p}/ \Log^{2/p} k \right\}} \\
&\quad \leqslant C \sum_{k=3}^{\infty} \frac{\lvert a_{k} \rvert}{k^{1/p} \Log^{2(p - 1)/p} \, k} \mathbb{E} \lvert X \rvert I_{\left\{\lvert X \rvert > k^{1/p}/ \Log^{2/p} k \right\}} \\
&\quad = C \sum_{k=3}^{\infty} \sum_{j=k}^{\infty} \frac{\lvert a_{k} \rvert}{k^{1/p} \Log^{2(p - 1)/p} \, k} \mathbb{E} \lvert X \rvert I_{A_{j+1}} \\
&\quad = C \sum_{j=3}^{\infty} \sum_{k=3}^{j} \frac{\lvert a_{k} \rvert}{k^{1/p} \Log^{2(p - 1)/p} \, k} \mathbb{E} \lvert X \rvert I_{A_{j+1}} \\
&\quad \leqslant C \sum_{j=3}^{\infty} \left[\sum_{k=3}^{j} \left(\frac{1}{k^{1/p} \Log^{2(p - 1)/p} \, k} - \frac{1}{(k + 1)^{1/p} \Log^{2(p - 1)/p} (k + 1)} \right) \sum_{m=1}^{k} \lvert a_{m} \rvert \right. \\
&\qquad + \left. \frac{1}{(j + 1)^{1/p} \Log^{2(p - 1)/p} (j + 1)} \sum_{m=1}^{j} \lvert a_{m} \rvert \right] \mathbb{E} \lvert X \rvert I_{A_{j+1}} \\
&\quad \leqslant C(p) \sum_{j=3}^{\infty} \left[\sum_{k=3}^{j} \frac{1}{k^{1 + 1/p} \Log^{2(p - 1)/p} k} \sum_{m=1}^{k} \lvert a_{m} \rvert \right. \\
&\qquad \left. + \frac{1}{(j + 1)^{1/p} \Log^{2(p - 1)/p} (j + 1)} \sum_{m=1}^{j} \lvert a_{m} \rvert \right] \mathbb{E} \lvert X \rvert I_{A_{j+1}} \\
&\quad \leqslant C(p) \sum_{j=1}^{\infty} \left[\sum_{k=1}^{j} \frac{1}{k^{1/p} \Log^{2(p - 1)/p} k} + \frac{1}{(j + 1)^{1/p - 1} \Log^{2(p - 1)/p} (j + 1)} \right] \mathbb{E} \lvert X \rvert I_{A_{j+1}} \\
&\quad \leqslant C(p) \sum_{j=1}^{\infty} \left[\frac{1}{j^{1/p - 1} \Log^{2(p - 1)/p} \, j} + \frac{1}{(j + 1)^{1/p - 1} \Log^{2(p - 1)/p} (j + 1)} \right] \mathbb{E} \lvert X \rvert I_{A_{j+1}} \\
&\quad \leqslant C(p) \sum_{j=1}^{\infty} \frac{1}{j^{1/p - 1} \Log^{2(p - 1)/p} \, j} \mathbb{E} \lvert X \rvert I_{A_{j+1}} \\
&\quad \leqslant C(p) \sum_{j=1}^{\infty} \frac{1}{j^{1/p - 1} \Log^{2(p - 1)/p} \, j} \cdot \frac{j^{(1 - p)/p}}{\Log^{2(1 - p)/p} j} \mathbb{E} \lvert X \rvert^{p} I_{A_{j+1}} \\
&\quad \leqslant C(p) \mathbb{E} \lvert X \rvert^{p} < \infty
\end{align*}
where $A_{j+1}$ is defined in \eqref{eq:2.10}. Therefore, a.s. convergence \eqref{eq:2.17} is assured and
\begin{equation*}
\frac{1}{n^{1/p} \Log^{2(p - 1)/p} \, n} \sum_{k=1}^{n} a_{k}(X_{k}'' - \mathbb{E} \, X_{k}'') \overset{\textnormal{a.s.}}{\longrightarrow} 0.
\end{equation*}
The proof is complete.
\end{proof}

\begin{remark}
For any $r \geqslant s > 0$, $\sum_{k=1}^{n} \lvert a_{k} \rvert^{s}/n \leqslant \left(\sum_{k=1}^{n} \lvert a_{k} \rvert^{r}/n \right)^{s/r}$ by H\"{o}lder's inequality which implies that assumption $\sup_{n \geqslant 1} \sum_{k=1}^{n} a_{k}^{2}/n < \infty$ in both Theorems~\ref{thr:1} and~\ref{thr:2} can be replaced by the (stronger) condition $\sup_{n \geqslant 1} \sum_{k=1}^{n} \lvert a_{k} \rvert^{q}/n < \infty$, $q \geqslant 2$.
\end{remark}

The example below provides us random variables satisfying \eqref{eq:2.2} or \eqref{eq:2.9} but not possessing finite second moments.

\begin{example}
Let $1 \leqslant p < 2$ and $\{X_{n}, \, n \geqslant 1 \}$ be a sequence of random variables such that for every $k \neq j$, $(X_{k},X_{j})$ has Farlie-Gumbel-Morgenstern bivariate distribution, i.e.
\begin{equation*}
F_{X_{k},X_{j}}(x,y) = F_{X_{k}}(x) F_{X_{j}}(y) + \rho F_{X_{k}}(x) F_{X_{j}}(y) \big[1 - F_{X_{k}}(x) \big] \big[1 - F_{X_{j}}(y) \big], \quad 0 \leqslant \rho \leqslant 1.
\end{equation*}
Thus, $\{X_{n}, \, n \geqslant 1 \}$ is a pairwise PQD sequence (see, for instance, \citealt{Lai00}). Supposing that $X_{n}$ has probability density function $f_{X_{n}}(t) = n(q - 1)(nt)^{-q} I_{(1/n,\infty)}(t)$, $p + 1 < q \leqslant 3$, it follows that $\{X_{n}, \, n \geqslant 1 \}$ is stochastically dominated by $X_{1}$ and $\mathbb{E} \, \lvert X_{1} \rvert^{p} = (q - 1)/(q - p - 1)$. Further, $\mathbb{E} \, X_{n}^{2} = \infty$ for all $n$ and after standard computations we obtain $G_{X_{k},X_{j}}(t) \leqslant \rho \, C(q)/(kj)$. Hence,
\begin{equation*}
\sum_{1 \leqslant k < j \leqslant \infty} \lvert a_{k} a_{j} \rvert \int_{j}^{\infty} t^{-3} G_{X_{k},X_{j}}(t) \, \mathrm{d}t \leqslant \rho \, C(q) \sum_{1 \leqslant k < j \leqslant \infty} \left(\frac{1}{j^{3}} + \frac{1}{k j^{2}} \right) < \infty
\end{equation*}
when $p = 1$, and
\begin{equation*}
\sum_{1 \leqslant k < j \leqslant \infty} \lvert a_{k} a_{j} \rvert \int_{\frac{j^{1/p}}{\Log^{2/p} j}}^{\infty} \frac{G_{X_{k},X_{j}}(t)}{t^{3} \Log^{2} \, t} \, \mathrm{d}t \leqslant \rho \, C(p,q) \sum_{1 \leqslant k < j \leqslant \infty} \Log^{(4 - 2p)/p} j \left(\frac{1}{j^{1 + 2/p}} + \frac{1}{k j^{2/p}} \right) < \infty
\end{equation*}
with $C(p,q)$ a positive constant depending only on $p$ and $q$ whenever $1 < p < 2$, by noting that $\sup_{n \geqslant 1} n^{-1} \sum_{k=1}^{n} a_{k}^{2} < \infty$ entails
\begin{equation*}
\lvert a_{k} a_{j} \rvert \leqslant \frac{a_{k}^{2} + a_{j}^{2}}{2} \leqslant \frac{\sum_{\ell=1}^{k} a_{\ell}^{2} + \sum_{\ell=1}^{j} a_{\ell}^{2}}{2} \leqslant C(k + j).
\end{equation*}
\end{example}

Notice that if the weights $\{a_{n} \}$ satisfy $a_{n} = 1$ for all $n$ and $\{X_{n}, \, n \geqslant 1 \}$ is a sequence of pairwise PQD random variables such that
\begin{equation}\label{eq:2.18}
\Delta_{X_{k},X_{j}}(x,y) = \Delta_{X_{1},X_{j}}(x,y)
\end{equation}
for any $1 \leqslant k < j$ and every $x,y \in \mathbb{R}$, then condition \eqref{eq:2.2} can be simplified to
\begin{equation}\label{eq:2.19}
\sum_{j=2}^{\infty} \int_{j}^{\infty} \frac{G_{X_{1},X_{j}}(v)}{v^{2}} \, \mathrm{d}v < \infty
\end{equation}
(see Remark (6) of \citealt{Louhichi00}) or even to the less restrictive assumption
\begin{equation*}
\sum_{j=2}^{\infty} \frac{G_{X_{1},X_{j}}(j)}{j} < \infty
\end{equation*} 
because
\begin{align*}
&\sum_{n=1}^{\infty} \frac{1}{n^{3}} \sum_{1 \leqslant k < j \leqslant n} \int_{-k}^{k} \int_{-j}^{j} \Delta_{X_{k},X_{j}}(x,y) \, \mathrm{d}x \mathrm{d}y = \\
&\qquad = \sum_{n=1}^{\infty} \frac{1}{n^{3}} \sum_{1 \leqslant k < j \leqslant n} \int_{-k}^{k} \int_{-j}^{j} \Delta_{X_{1},X_{j}}(x,y) \, \mathrm{d}x \mathrm{d}y \\
&\qquad \leqslant \sum_{n=2}^{\infty} \sum_{j=2}^{n} \frac{G_{X_{1},X_{j}}(n)}{n^{2}} \\
&\qquad = \sum_{j=2}^{\infty} \int_{-\infty}^{\infty} \int_{-\infty}^{\infty} \sum_{n=1}^{\infty} \frac{I_{\left\{n \geqslant \lvert x \rvert \vee \lvert y \rvert \vee j \right\}}}{n^{2}} \Delta_{X_{1},X_{j}}(x,y) \, \mathrm{d}x \mathrm{d}y \\
&\qquad \leqslant C \sum_{j=2}^{\infty} \int_{-\infty}^{\infty} \int_{-\infty}^{\infty} \frac{\Delta_{X_{1},X_{j}}(x,y)}{\lvert x \rvert \vee \lvert y \rvert \vee j} \, \mathrm{d}x \mathrm{d}y \\
&\qquad = C \sum_{j=2}^{\infty} \int_{-\infty}^{\infty} \int_{-\infty}^{\infty} \int_{0}^{1} I_{\left\{\lvert x \rvert \leqslant 1/u \right\}} I_{\left\{\lvert y \rvert \leqslant 1/u \right\}} I_{\left\{u \leqslant 1/j \right\}} \Delta_{X_{1},X_{j}}(x,y) \, \mathrm{d}u \, \mathrm{d}x \mathrm{d}y \\
&\qquad = C \sum_{j=2}^{\infty} \int_{0}^{1/j} G_{X_{1},X_{j}}\left(\frac{1}{u} \right) \, \mathrm{d}u \\
&\qquad = C \sum_{j=2}^{\infty} \int_{j}^{\infty} \frac{G_{X_{1},X_{j}}(v)}{v^{2}} \, \mathrm{d}v
\end{align*}
and
\begin{align*}
\sum_{n=1}^{\infty} \frac{1}{n^{3}} \sum_{1 \leqslant k < j \leqslant n} \int_{-k}^{k} \int_{-j}^{j} \Delta_{X_{k},X_{j}}(x,y) \, \mathrm{d}x \mathrm{d}y &= \sum_{n=1}^{\infty} \frac{1}{n^{3}} \sum_{1 \leqslant k < j \leqslant n} \int_{-k}^{k} \int_{-j}^{j} \Delta_{X_{1},X_{j}}(x,y) \, \mathrm{d}x \mathrm{d}y \\
&\leqslant \sum_{n=2}^{\infty} \sum_{j=2}^{n} \frac{(j - 1) G_{X_{1},X_{j}}(j)}{n^{3}} \\ &\leqslant C \sum_{j=2}^{\infty} \frac{G_{X_{1},X_{j}}(j)}{j}
\end{align*}
(recall that $t \mapsto G_{X_{k},X_{j}}(t)$ is nondecreasing). We emphasise that \eqref{eq:2.18} holds if for all $n,m \geqslant 1$,
\begin{equation}\label{eq:2.20}
(X_{n},X_{n+m}) \overset{\mathrm{d}}{=} (X_{1},X_{1+m}).
\end{equation}
In addition, one can demonstrate that for any sequence $\{X_{n}, \, n \geqslant 1 \}$ of pairwise PQD random variables satisfying condition \eqref{eq:2.20},
\begin{equation}\label{eq:2.21}
\sum_{j=1}^{\infty} j \int_{j+1}^{\infty} v^{-3} G_{1,j+1}(v) \, \mathrm{d}v < \infty
\end{equation}
and
\begin{equation*}
\sum_{1 \leqslant k < j \leqslant \infty} j^{-2} \int_{-k}^{k} \int_{-j}^{j} \Delta_{X_{1},X_{j}}(x,y) \, \mathrm{d}x \mathrm{d}y < \infty
\end{equation*}
are both equivalent by employing the same proof of \citealt{Chen19} for (positively) associated random sequences (see Appendix of \citealt{Chen19} for details). Obviously, \eqref{eq:2.19} implies \eqref{eq:2.21}.

Similarly, under the assumptions of Theorem~\ref{thr:2}, and the extra conditions $a_{n} = 1$ for all $n$, \eqref{eq:2.18}, condition \eqref{eq:2.9} can be simplified to
\begin{equation*}
\sum_{j=2}^{\infty} \int_{\frac{j^{1/p}}{\Log^{2/p} j}}^{\infty} \frac{G_{X_{1},X_{j}}(t)}{t^{3} \Log^{2} \, t}  \, \mathrm{d}t < \infty.
\end{equation*}
Moreover, in this scenario one can still relax \eqref{eq:2.9} to
\begin{equation*}
\sum_{j=2}^{\infty} \frac{\Log^{2(2 - p)/p} j}{j^{2/p - 1}} G_{X_{1},X_{j}} \left(\frac{j^{1/p}}{\Log^{2/p} j} \right) < \infty.
\end{equation*}
Let us point out that the identical distribution of $\{X_{n}, \, n \geqslant 1 \}$ is not a sufficient condition to obtain \eqref{eq:2.18} as the next example shows.

\begin{example}
Considering the following joint probability function of $(X_{k},X_{j})$, $k < j$,

\begin{center}
\begin{tabular}{|c|c|c|c|} \hline
\backslashbox{\tabular{@{}l@{}}$X_{k}$\endtabular}{$X_{j}$}
    & $0$ & $1$ & \\ \hline
$0$ & $\frac{1}{4} + \frac{1}{2^{k+j}}$ & $\frac{1}{4} - \frac{1}{2^{k+j}}$ & $\frac{1}{2}$ \rule{0pt}{3.0ex} \\[5pt] \hline
$1$ & $\frac{1}{4} - \frac{1}{2^{k+j}}$ & $\frac{1}{4} + \frac{1}{2^{k+j}}$ & $\frac{1}{2}$ \rule{0pt}{3.0ex} \\[5pt] \hline
    & $\frac{1}{2}$ & $\frac{1}{2}$ & \rule{0pt}{3.0ex} \\[5pt] \hline
\end{tabular}
\end{center}
we have $\mathbb{P} \left\{X_{n} = 0 \right\} = 1/2 = \mathbb{P} \left\{X_{n} = 1 \right\}$ for each $n \geqslant 1$ and
\begin{equation*}
\Delta_{X_{k},X_{j}}(x,y) = \frac{1}{2^{k+j}} \neq \frac{1}{2^{1+j}} = \Delta_{X_{1},X_{j}}(x,y), \qquad k > 1
\end{equation*}
for all $x,y < 1$.
\end{example}

\section{Applications}

\subsection{Linear errors-in-variables regression model}

\indent

Consider the simple linear errors-in-variables regression model,
\begin{equation}\label{eq:3.1}
\left\{
\begin{array}{l}
  \eta_{n} = \alpha + \beta x_{n} + \varepsilon_{n} \\[5pt]
  \xi_{n} = x_{n} + \delta_{n}
\end{array}
\right. \qquad (n \geqslant 1)
\end{equation}
where $\alpha, \beta$ are unknown parameters, $x_{1},x_{2},\ldots$ are (non-random) constants and $\left\{\varepsilon_{n}, \, n \geqslant 1 \right\}$, $\left\{\delta_{n}, \, n \geqslant 1 \right\}$ are two sequences of random variables. Recall that the model \eqref{eq:3.1} not only furnishes an approximation to real world situations but also it helps us understand the theoretical underpinnings of methods for other models (see \citealt{Fuller87}). Rewriting \eqref{eq:3.1} as an ordinary regression model having stochastic regressors and errors $\varepsilon_{k} - \beta \delta_{k}$, i.e.
\begin{equation*}
\eta_{n} = \alpha + \beta \xi_{n} + (\varepsilon_{n} - \beta \delta_{n}) \qquad (n \geqslant 1),
\end{equation*}
formally, we can obtain the least-squares estimators of $\beta$ and $\alpha$ as
\begin{equation}\label{eq:3.2}
\widehat{\beta}_{n} := \frac{\sum_{k=1}^{n}\left(\xi_{k} - n^{-1} \sum_{j=1}^{n} \xi_{j} \right) \left(\eta_{k} - n^{-1} \sum_{j=1}^{n} \eta_{j} \right)}{\sum_{k=1}^{n} \left(\xi_{k} - n^{-1} \sum_{k=1}^{n} \xi_{k} \right)^{2}}
\end{equation}
and
\begin{equation}\label{eq:3.3}
\widehat{\alpha}_{n} := \frac{1}{n} \sum_{k=1}^{n} \eta_{k}  - \frac{\widehat{\beta}_{n}}{n} \sum_{k=1}^{n} \xi_{k},
\end{equation}
respectively (see \citealt{Liu05}).

In \citealt{Liu05}, necessary and sufficient conditions were given to ensure the strong consistency of $\widehat{\beta}_{n}$ and $\widehat{\alpha}_{n}$ assuming that $\left\{(\varepsilon_{n},\delta_{n}), \, n  \geqslant 1 \right\}$ is a sequence of independent random vectors, $\left\{\varepsilon_{n}, \, n  \geqslant 1 \right\}$ is a sequence of i.i.d. random variables and $\left\{\delta_{n}, \, n  \geqslant 1 \right\}$ is a sequence of i.i.d. random variables satisfying $\mathbb{E} \, \varepsilon_{1} = \mathbb{E} \, \delta_{1} = 0$, $0 < \mathbb{E} \, \delta_{1}^{2} < \infty$, $0 < \mathbb{E} \, \varepsilon_{1}^{2} < \infty$. Later, admitting that $\left\{(\varepsilon_{n},\delta_{n}), \, n  \geqslant 1 \right\}$ is a sequence of stationary $\alpha$-mixing random vectors, sufficient conditions were given in \citealt{Fan10} to get the strong consistency of $\widehat{\alpha}_{n}$ and $\widehat{\beta}_{n}$. More recently, necessary and sufficient conditions for the strong consistency of these estimators were obtained in \citealt{Hu17} when $\left\{(\varepsilon_{n}, \delta_{n}), \, n  \geqslant 1 \right\}$ is a sequence of identically distributed $\psi$-mixing random vectors.

In order to broaden further the dependence structure of the random components in the model \eqref{eq:3.1}, we shall establish sufficient conditions for the strong consistency of both estimators, $\widehat{\alpha}_{n}$ and $\widehat{\beta}_{n}$, under sequences $\left\{\varepsilon_{n}, \, n  \geqslant 1 \right\}$ and $\left\{\delta_{n}, \, n  \geqslant 1 \right\}$ of pairwise PQD random variables.

Here, $\overline{x}_{n} := \sum_{k=1}^{n} x_{k}/n$ and other similar notations, such as $\overline{\delta}_{n}$ or $\overline{\xi}_{n}$ are defined in the same way.

\begin{theorem}
Suppose that in model \eqref{eq:3.1}, $\{\varepsilon_{n}, \, n \geqslant 1 \}$ is a sequence of pairwise PQD random variables stochastically dominated by a random variable $\varepsilon \in \mathscr{L}_{2}$,
\begin{equation}\label{eq:3.4}
\sum_{1 \leqslant k < j \leqslant \infty} \int_{j}^{\infty} t^{-2} \left[ G_{\varepsilon_{k}^{+},\varepsilon_{j}^{+}}(\sqrt{t}) + G_{\varepsilon_{k}^{-},\varepsilon_{j}^{-}}(\sqrt{t}) \right] \, \mathrm{d}t < \infty
\end{equation}
and $\{\delta_{n}, \, n \geqslant 1 \}$ is a sequence of pairwise PQD random variables stochastically
dominated by a random variable $\delta \in \mathscr{L}_{2}$,
\begin{equation*}
\sum_{1 \leqslant k < j \leqslant \infty} \int_{j}^{\infty} t^{-2} \left[ G_{\delta_{k}^{+},\delta_{j}^{+}}(\sqrt{t}) + G_{\delta_{k}^{-},\delta_{j}^{-}}(\sqrt{t}) \right] \, \mathrm{d}t < \infty.
\end{equation*}
If $n/\sum_{k=1}^{n} (x_{k} - \overline{x}_{n})^{2} = o(1)$ as $n \rightarrow \infty$, then $\widehat{\beta}_{n} \overset{\textnormal{a.s.}}{\longrightarrow} \beta$. Additionally, if $n \, \lvert \overline{x}_{n} \rvert(\lvert\overline{x}_{n} \rvert \vee 1)/\sum_{k=1}^{n} (x_{k} - \overline{x}_{n})^{2} = o(1)$ as $n \rightarrow \infty$ then $\widehat{\alpha}_{n} \overset{\textnormal{a.s.}}{\longrightarrow} \alpha$.
\end{theorem}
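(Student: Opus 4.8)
\medskip

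\noindent\textbf{Proof proposal.}
The plan is to reduce the statement to two facts about the error sequences, both obtained from Theorem~\ref{thr:1}: that $\overline{\varepsilon}_n\to 0$ and $\overline{\delta}_n\to 0$ almost surely, and that $n^{-1}\sum_{k=1}^{n}\varepsilon_k^{2}$ and $n^{-1}\sum_{k=1}^{n}\delta_k^{2}$ are almost surely bounded. Since $t\mapsto t^{+}$ is nondecreasing and $t\mapsto t^{-}$ nonincreasing, the sequences $\{\varepsilon_n^{+}\}$, $\{\varepsilon_n^{-}\}$, $\{(\varepsilon_n^{+})^{2}\}$, $\{(\varepsilon_n^{-})^{2}\}$ are again pairwise PQD, the first two stochastically dominated by $\abs{\varepsilon}\in\mathscr{L}_{1}$ and the last two by $\varepsilon^{2}\in\mathscr{L}_{1}$ (and similarly for $\delta$). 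For the first moments I would note that the change of variable $t=s^{2}$ gives $\int_{j}^{\infty}t^{-2}G_{\varepsilon_k^{+},\varepsilon_j^{+}}(\sqrt t)\,\mathrm{d}t=2\int_{\sqrt j}^{\infty}s^{-3}G_{\varepsilon_k^{+},\varepsilon_j^{+}}(s)\,\mathrm{d}s\geqslant 2\int_{j}^{\infty}s^{-3}G_{\varepsilon_k^{+},\varepsilon_j^{+}}(s)\,\mathrm{d}s$, so that \eqref{eq:3.4} forces \eqref{eq:2.2} (with $a_n\equiv 1$) for $\{\varepsilon_n^{+}\}$ and for $\{\varepsilon_n^{-}\}$; Theorem~\ref{thr:1} then yields $n^{-1}\sum_{k=1}^{n}(\varepsilon_k^{\pm}-\mathbb{E}\,\varepsilon_k^{\pm})\to 0$ a.s., and subtracting, together with $\mathbb{E}\,\varepsilon_n=\mathbb{E}\,\delta_n=0$, gives $\overline{\varepsilon}_n,\overline{\delta}_n\to 0$ a.s.

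For the second moments the crucial observation is that $g_t(a^{2})=(g_{\sqrt t}(a))^{2}$ for $a\geqslant 0$; together with Hoeffding's covariance identity and the PQD property of $(\varepsilon_k^{+},\varepsilon_j^{+})$ this gives
\begin{equation*}
G_{(\varepsilon_k^{+})^{2},(\varepsilon_j^{+})^{2}}(t)=\int_{0}^{\sqrt t}\!\!\int_{0}^{\sqrt t}4uv\,\Delta_{\varepsilon_k^{+},\varepsilon_j^{+}}(u,v)\,\mathrm{d}u\,\mathrm{d}v\leqslant 4t\,G_{\varepsilon_k^{+},\varepsilon_j^{+}}(\sqrt t),
\end{equation*}
whence $\sum_{1\leqslant k<j}\int_{j}^{\infty}t^{-3}G_{(\varepsilon_k^{+})^{2},(\varepsilon_j^{+})^{2}}(t)\,\mathrm{d}t\leqslant 4\sum_{1\leqslant k<j}\int_{j}^{\infty}t^{-2}G_{\varepsilon_k^{+},\varepsilon_j^{+}}(\sqrt t)\,\mathrm{d}t<\infty$ by \eqref{eq:3.4}, which is exactly \eqref{eq:2.2} for $\{(\varepsilon_n^{\pm})^{2}\}$. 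Theorem~\ref{thr:1} then gives $n^{-1}\sum_{k=1}^{n}[(\varepsilon_k^{\pm})^{2}-\mathbb{E}(\varepsilon_k^{\pm})^{2}]\to 0$ a.s.; since $\mathbb{E}(\varepsilon_k^{\pm})^{2}\leqslant\mathbb{E}\,\varepsilon_k^{2}\leqslant C\,\mathbb{E}\,\varepsilon^{2}$ by stochastic domination, $n^{-1}\sum_{k=1}^{n}\varepsilon_k^{2}$ is a.s. bounded, and likewise $n^{-1}\sum_{k=1}^{n}\delta_k^{2}$.

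Next I would substitute $\xi_k=x_k+\delta_k$ and $\eta_k=\alpha+\beta x_k+\varepsilon_k$ into \eqref{eq:3.2}; writing $S_n:=\sum_{k=1}^{n}(x_k-\overline{x}_n)^{2}$ and using $\sum_{k=1}^{n}(x_k-\overline{x}_n)=0$, one obtains
\begin{equation*}
\widehat{\beta}_n-\beta=\frac{\sum_{k=1}^{n}(x_k-\overline{x}_n)\varepsilon_k-\beta\sum_{k=1}^{n}(x_k-\overline{x}_n)\delta_k+\sum_{k=1}^{n}(\delta_k-\overline{\delta}_n)(\varepsilon_k-\overline{\varepsilon}_n)-\beta\sum_{k=1}^{n}(\delta_k-\overline{\delta}_n)^{2}}{S_n+2\sum_{k=1}^{n}(x_k-\overline{x}_n)\delta_k+\sum_{k=1}^{n}(\delta_k-\overline{\delta}_n)^{2}}.
\end{equation*}
By Cauchy--Schwarz, $S_n^{-1}\abs{\sum_{k=1}^{n}(x_k-\overline{x}_n)\varepsilon_k}\leqslant(n/S_n)^{1/2}(n^{-1}\sum_{k=1}^{n}\varepsilon_k^{2})^{1/2}\to 0$ a.s. (and likewise with $\delta$ in place of $\varepsilon$), while $\abs{\sum_{k=1}^{n}(\delta_k-\overline{\delta}_n)(\varepsilon_k-\overline{\varepsilon}_n)}\leqslant(\sum_{k=1}^{n}\delta_k^{2}\sum_{k=1}^{n}\varepsilon_k^{2})^{1/2}$ and $0\leqslant\sum_{k=1}^{n}(\delta_k-\overline{\delta}_n)^{2}\leqslant\sum_{k=1}^{n}\delta_k^{2}$ are $O(n)=o(S_n)$ a.s.; hence the numerator over $S_n$ tends to $0$, the denominator over $S_n$ tends to $1$, and $\widehat{\beta}_n\to\beta$ a.s. From \eqref{eq:3.2}--\eqref{eq:3.3} one has $\widehat{\alpha}_n-\alpha=(\beta-\widehat{\beta}_n)\overline{x}_n+\overline{\varepsilon}_n-\widehat{\beta}_n\overline{\delta}_n$; the last two terms $\to 0$ a.s. because $\overline{\varepsilon}_n,\overline{\delta}_n\to 0$ and $\widehat{\beta}_n$ is a.s. bounded, and for $(\beta-\widehat{\beta}_n)\overline{x}_n$ I would multiply the quotient above by $\overline{x}_n$ and repeat the same bounds: the Cauchy--Schwarz terms then carry an extra factor $(n\overline{x}_n^{2}/S_n)^{1/2}$ and the remaining ones an extra factor $n\abs{\overline{x}_n}/S_n$, both $o(1)$ since $n\abs{\overline{x}_n}(\abs{\overline{x}_n}\vee 1)/S_n=o(1)$ dominates $n\overline{x}_n^{2}/S_n$ and $n\abs{\overline{x}_n}/S_n$ at once; this yields $\widehat{\alpha}_n\to\alpha$ a.s.

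The two steps I expect to be delicate are the covariance comparison of the second paragraph — verifying that \eqref{eq:3.4} is precisely the hypothesis making Theorem~\ref{thr:1} applicable to the truncated squares (and, in the same way, to the truncated first powers behind $\overline{\varepsilon}_n,\overline{\delta}_n\to 0$) — and, for the intercept, checking that the strengthened growth condition on $\overline{x}_n$ absorbs exactly the worst summand of $(\beta-\widehat{\beta}_n)\overline{x}_n$, the one controlled by Cauchy--Schwarz against $S_n^{1/2}$, which is why $n\overline{x}_n^{2}/S_n\to 0$ (and not merely $n/S_n\to 0$) is needed.
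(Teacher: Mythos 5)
Your proposal is correct and follows essentially the same route as the paper: the bound $G_{(\varepsilon_k^{\pm})^{2},(\varepsilon_j^{\pm})^{2}}(t)\leqslant 4t\,G_{\varepsilon_k^{\pm},\varepsilon_j^{\pm}}(\sqrt{t})$ feeding Theorem~\ref{thr:1} to get a.s.\ boundedness of $n^{-1}\sum_{k=1}^{n}\varepsilon_k^{2}$ and $n^{-1}\sum_{k=1}^{n}\delta_k^{2}$, followed by the same Cauchy--Schwarz reductions against $s_n$ for $\widehat{\beta}_n$ and the extra factor $n\,\lvert\overline{x}_n\rvert(\lvert\overline{x}_n\rvert\vee 1)/s_n$ for $\widehat{\alpha}_n$. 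The only cosmetic differences (deriving $\overline{\varepsilon}_n\to 0$ via $\varepsilon_n^{\pm}$ rather than applying Theorem~\ref{thr:1} to $\varepsilon_n$ directly, and an equivalent algebraic form of $\widehat{\beta}_n-\beta$) do not change the argument.
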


\begin{proof}
Supposing $\varepsilon_{n}^{+} := \varepsilon_{n} \vee 0$ and $\varepsilon_{n}^{-} := (- \varepsilon_{n}) \vee 0$, it is straightforward to see that $\left\{(\varepsilon_{n}^{+})^{2}, \, n \geqslant 1 \right\}$ is a sequence of pairwise PQD random variables stochastically dominated by $\varepsilon^{2}$. Since
\begin{equation*}
\begin{split}
G_{(\varepsilon_{k}^{+})^{2},(\varepsilon_{j}^{+})^{2}}(t) &= \int_{-t}^{t} \int_{-t}^{t} \left[\mathbb{P} \left\{(\varepsilon_{k}^{+})^{2} \leqslant x, (\varepsilon_{j}^{+})^{2} \leqslant y \right\} - \mathbb{P} \left\{(\varepsilon_{k}^{+})^{2} \leqslant x \right\} \mathbb{P} \left\{(\varepsilon_{j}^{+})^{2} \leqslant y \right\} \right] \mathrm{d}x \mathrm{d}y \\
& = \int_{0}^{t} \int_{0}^{t} \left[\mathbb{P} \left\{\varepsilon_{k}^{+} \leqslant \sqrt{x}, \varepsilon_{j}^{+} \leqslant \sqrt{y} \right\} - \mathbb{P} \left\{\varepsilon_{k}^{+} \leqslant \sqrt{x} \right\} \mathbb{P} \left\{\varepsilon_{j}^{+} \leqslant \sqrt{y} \right\} \right] \mathrm{d}x \mathrm{d}y \\
& = 4 \int_{0}^{\sqrt{t}} \int_{0}^{\sqrt{t}} uv \left[\mathbb{P} \left\{\varepsilon_{k}^{+} \leqslant u, \varepsilon_{j}^{+} \leqslant v \right\} - \mathbb{P} \left\{\varepsilon_{k}^{+} \leqslant u \right\} \mathbb{P} \left\{\varepsilon_{j}^{+} \leqslant v \right\} \right]\mathrm{d}u \mathrm{d}v \\
& \leqslant 4 t \int_{0}^{\sqrt{t}} \int_{0}^{\sqrt{t}} \left[\mathbb{P} \left\{\varepsilon_{k}^{+} \leqslant u, \varepsilon_{j}^{+} \leqslant v \right\} - \mathbb{P} \left\{\varepsilon_{k}^{+} \leqslant u \right\} \mathbb{P} \left\{\varepsilon_{j}^{+} \leqslant v \right\} \right]\mathrm{d}u \mathrm{d}v \\
& = 4t G_{\varepsilon_{k}^{+},\varepsilon_{j}^{+}}(\sqrt{t})
\end{split}
\end{equation*}
we obtain $\sum_{k=1}^{n} \big[(\varepsilon_{k}^{+})^{2} - \mathbb{E} (\varepsilon_{k}^{+})^{2} \big]/n \overset{\textnormal{a.s.}}{\longrightarrow} 0$ via Theorem~\ref{thr:1}. Thus,
\begin{equation*}
\limsup_{n \rightarrow \infty} \abs{\frac{1}{n} \sum_{k=1}^{n} (\varepsilon_{k}^{+})^{2}} \leqslant \limsup_{n \rightarrow \infty} \left\{\abs{\frac{1}{n} \sum_{k=1}^{n} \left[(\varepsilon_{k}^{+})^{2} - \mathbb{E} (\varepsilon_{k}^{+})^{2} \right]} + \frac{1}{n} \sum_{k=1}^{n} \mathbb{E} (\varepsilon_{k}^{+})^{2} \right\} \leqslant \mathbb{E} \, \varepsilon^{2} \quad \text{a.s.}
\end{equation*}
By analogous reasoning we can conclude ${\displaystyle \limsup_{n \rightarrow \infty}} \abs{\sum_{k=1}^{n} (\varepsilon_{k}^{-})^{2}/n} \leqslant \mathbb{E} \, \varepsilon^{2}$ a.s. and so
\begin{equation}\label{eq:3.5}
\limsup_{n \rightarrow \infty} \abs{\frac{1}{n} \sum_{k=1}^{n} \varepsilon_{k}^{2}} = \limsup_{n \rightarrow \infty} \abs{\frac{1}{n} \sum_{k=1}^{n} \left[(\varepsilon_{k}^{+})^{2} + (\varepsilon_{k}^{-})^{2} \right]} \leqslant 2 \, \mathbb{E} \, \varepsilon^{2} \quad \text{a.s.}
\end{equation}
Similarly,
\begin{equation}\label{eq:3.6}
\limsup_{n \rightarrow \infty} \abs{\frac{1}{n} \sum_{k=1}^{n} \delta_{k}^{2}} \leqslant 2 \, \mathbb{E} \, \delta^{2} \quad \text{a.s.}
\end{equation}
Setting $s_{n} := \sum_{k=1}^{n} (x_{k} - \overline{x}_{n})^{2}$, it follows
\begin{equation}\label{eq:3.7}
\abs{\frac{\sum_{k=1}^{n} (\delta_{k} - \overline{\delta}_{n})\varepsilon_{k}}{s_{n}}} \leqslant \frac{n}{s_{n}} \cdot \frac{1}{n} \sum_{k=1}^{n} \lvert \varepsilon_{k} \delta_{k} \rvert + \frac{n}{s_{n}} \lvert \overline{\delta}_{n} \, \overline{\varepsilon}_{n} \rvert \leqslant \frac{2n}{s_{n}} \left(\frac{1}{n} \sum_{k=1}^{n} \varepsilon_{k}^{2} \right)^{1/2} \left(\frac{1}{n} \sum_{k=1}^{n} \delta_{k}^{2} \right)^{1/2} \overset{\textnormal{a.s.}}{\longrightarrow} 0
\end{equation}
and
\begin{equation}\label{eq:3.8}
\frac{\sum_{k=1}^{n}(\delta_{k} - \overline{\delta}_{n})^{2}}{s_{n}} = \frac{n}{s_{n}} \cdot \frac{1}{n} \sum_{k=1}^{n} \delta_{k}^{2} - \frac{n}{s_{n}} \cdot \overline{\delta}_{n}^{2} \leqslant \frac{n}{s_{n}} \cdot \frac{1}{n} \sum_{k=1}^{n} \delta_{k}^{2} \overset{\textnormal{a.s.}}{\longrightarrow} 0
\end{equation}
from \eqref{eq:3.5} and \eqref{eq:3.6}. Moreover,
\begin{equation*}
\abs{\frac{\sum_{k=1}^{n} (x_{k} - \overline{x}_{n}) \varepsilon_{k}}{s_{n}}} \leqslant \left(\frac{n}{s_{n}} \right)^{1/2} \left(\frac{\sum_{k=1}^{n} \varepsilon_{k}^{2}}{n} \right)^{1/2} \overset{\textnormal{a.s.}}{\longrightarrow} 0
\end{equation*}
and
\begin{equation*}
\abs{\frac{\sum_{k=1}^{n} (x_{k} - \overline{x}_{n}) \delta_{k}}{s_{n}}} \leqslant \left(\frac{n}{s_{n}} \right)^{1/2} \left(\frac{\sum_{k=1}^{n} \delta_{k}^{2}}{n} \right)^{1/2} \overset{\textnormal{a.s.}}{\longrightarrow} 0,
\end{equation*}
yielding
\begin{equation}\label{eq:3.9}
\frac{\sum_{k=1}^{n} (x_{k} - \overline{x}_{n})(\varepsilon_{k} - \beta \delta_{k})}{s_{n}} \overset{\textnormal{a.s.}}{\longrightarrow} 0.
\end{equation}
Thus, \eqref{eq:3.8} entails
\begin{equation*}
\abs{\frac{\sum_{k=1}^{n} (x_{k} - \overline{x}_{n})(\delta_{k} - \overline{\delta}_{n})}{s_{n}}}
\leqslant \left[\frac{\sum_{k=1}^{n} (\delta_{k} - \overline{\delta}_{n})^{2}}{s_{n}} \right]^{1/2}  \overset{\textnormal{a.s.}}{\longrightarrow} 0
\end{equation*}
and also
\begin{equation}\label{eq:3.10}
\frac{\sum_{k=1}^{n}(\xi_{k} - \overline{\xi}_{n})^{2}}{s_{n}} = 1 + \frac{2 \sum_{k=1}^{n}(x_{k} - \overline{x}_{n})(\delta_{k} - \overline{\delta}_{n})}{s_{n}} + \frac{\sum_{k=1}^{n}(\delta_{k} - \overline{\delta}_{n})^{2}}{s_{n}} \overset{\textnormal{a.s.}}{\longrightarrow} 1.
\end{equation}
Since
\begin{align*}
& \widehat{\beta}_{n} - \beta = \\
&\quad = \frac{\sum_{k=1}^{n} (\delta_{k} - \overline{\delta}_{n})\varepsilon_{k} + \sum_{k=1}^{n} (x_{k} - \overline{x}_{n})(\varepsilon_{k} - \beta \overline{\delta}_{k}) - \beta \sum_{k=1}^{n}(\delta_{k} - \overline{\delta}_{n})^{2}}{\sum_{k=1}^{n} (\xi_{k} - \overline{\xi}_{n})^{2}} \\
&\quad = \frac{s_{n}}{\sum_{k=1}^{n} (\xi_{k} - \overline{\xi}_{n})^{2}} \left[\frac{\sum_{k=1}^{n} (\delta_{k} - \overline{\delta}_{n})\varepsilon_{k}}{s_{n}} + \frac{\sum_{k=1}^{n} (x_{k} - \overline{x}_{n})(\varepsilon_{k} - \beta \overline{\delta}_{k})}{s_{n}} - \beta \frac{\sum_{k=1}^{n}(\delta_{k} - \overline{\delta}_{n})^{2}}{s_{n}} \right]
\end{align*}
we obtain $\widehat{\beta}_{n} \overset{\textnormal{a.s.}}{\longrightarrow} \beta$ from \eqref{eq:3.7}, \eqref{eq:3.8}, \eqref{eq:3.9} and \eqref{eq:3.10}. On the other hand,
\begin{equation*}
\widehat{\alpha}_{n} - \alpha = (\beta - \widehat{\beta}_{n}) \overline{x}_{n} + (\beta - \widehat{\beta}_{n}) \overline{\delta}_{n} - \beta \overline{\delta}_{n} + \overline{\varepsilon}_{n}.
\end{equation*}
According to Theorem~\ref{thr:1}, $\overline{\varepsilon}_{n} \overset{\textnormal{a.s.}}{\longrightarrow} 0$ and $\overline{\delta}_{n} \overset{\textnormal{a.s.}}{\longrightarrow} 0$. Hence, it suffices to prove
\begin{equation}\label{eq:3.11}
(\beta - \widehat{\beta}_{n}) \overline{x}_{n} \overset{\textnormal{a.s.}}{\longrightarrow} 0.
\end{equation}
We have
\begin{equation}\label{eq:3.12}
\begin{split}
& \abs{\frac{\overline{x}_{n}}{s_{n}} \sum_{k=1}^{n} (\delta_{k} - \overline{\delta}_{n})\varepsilon_{k}} \leqslant \frac{n \lvert \overline{x}_{n} \rvert}{s_{n}} \cdot \frac{1}{n} \sum_{k=1}^{n} \lvert \varepsilon_{k} \delta_{k} \rvert + \frac{n \lvert \overline{x}_{n} \rvert}{s_{n}} \cdot  \lvert \overline{\delta}_{n} \, \overline{\varepsilon}_{n} \rvert \leqslant \\
&\quad \leqslant \frac{2 n \lvert \overline{x}_{n} \rvert}{s_{n}} \cdot \left(\frac{1}{n} \sum_{k=1}^{n} \varepsilon_{k}^{2} \right)^{1/2} \left(\frac{1}{n} \sum_{k=1}^{n} \delta_{k}^{2} \right)^{1/2} \overset{\textnormal{a.s.}}{\longrightarrow} 0
\end{split}
\end{equation}
and
\begin{equation}\label{eq:3.13}
\begin{split}
&\abs{\frac{\overline{x}_{n}}{s_{n}} \sum_{k=1}^{n}(\delta_{k} - \overline{\delta}_{n})^{2}} = \frac{n \lvert \overline{x}_{n} \rvert}{s_{n}} \cdot \frac{1}{n} \sum_{k=1}^{n}(\delta_{k} - \overline{\delta}_{n})^{2} = \\
&\qquad = \frac{n \lvert \overline{x}_{n} \rvert}{s_{n}} \left(\frac{1}{n} \sum_{k=1}^{n} \delta_{k}^{2} - \overline{\delta}_{n}^{2} \right) \leqslant \frac{n \lvert \overline{x}_{n} \rvert}{s_{n}} \cdot \frac{1}{n} \sum_{k=1}^{n} \delta_{k}^{2} \overset{\textnormal{a.s.}}{\longrightarrow} 0.
\end{split}
\end{equation}
Moreover,
\begin{align*}
& \abs{\frac{\overline{x}_{n} \sum_{k=1}^{n} (x_{k} - \overline{x}_{n})\varepsilon_{k}}{s_{n}}} \leqslant \frac{\lvert \overline{x}_{n} \rvert}{s_{n}} \cdot \left[\sum_{k=1}^{n} (x_{k} - \overline{x}_{n})^{2} \right]^{1/2} \left(\sum_{k=1}^{n} \varepsilon_{k}^{2} \right)^{1/2} = \\
& \qquad = \left(\frac{n \, \overline{x}_{n}^{2}}{s_{n}} \right)^{1/2} \cdot \left(\frac{1}{n} \sum_{k=1}^{n} \varepsilon_{k}^{2} \right)^{1/2} \overset{\textnormal{a.s.}}{\longrightarrow} 0
\end{align*}
and
\begin{align*}
& \abs{\frac{\overline{x}_{n} \sum_{k=1}^{n} (x_{k} - \overline{x}_{n})\delta_{k}}{s_{n}}} \leqslant \frac{\lvert \overline{x}_{n} \rvert}{s_{n}} \cdot \left[\sum_{k=1}^{n} (x_{k} - \overline{x}_{n})^{2} \right]^{1/2} \left(\sum_{k=1}^{n} \delta_{k}^{2} \right)^{1/2} = \\
& \qquad = \left(\frac{n \, \overline{x}_{n}^{2}}{s_{n}} \right)^{1/2} \cdot \left(\frac{1}{n} \sum_{k=1}^{n} \delta_{k}^{2} \right)^{1/2} \overset{\textnormal{a.s.}}{\longrightarrow} 0
\end{align*}
imply
\begin{equation}\label{eq:3.14}
\frac{\overline{x}_{n} \sum_{k=1}^{n} (x_{k} - \overline{x}_{n})(\varepsilon_{k} - \beta \delta_{k})}{s_{n}} \overset{\textnormal{a.s.}}{\longrightarrow} 0.
\end{equation}
Thus,
\begin{align*}
&\overline{x}_{n} (\beta - \widehat{\beta}_{n}) = - \frac{s_{n}}{\sum_{k=1}^{n}(\xi_{k} - \overline{\xi}_{n})^{2}} \cdot \\
&\qquad \left[\frac{\overline{x}_{n} \sum_{k=1}^{n} (\delta_{k} - \overline{\delta}_{n})\varepsilon_{k}}{s_{n}} + \frac{\overline{x}_{n} \sum_{k=1}^{n} (x_{k} - \overline{x}_{n})(\varepsilon_{k} - \beta \overline{\delta}_{k})}{s_{n}} - \beta \frac{\overline{x}_{n} \sum_{k=1}^{n}(\delta_{k} - \overline{\delta}_{n})^{2}}{s_{n}} \right]
\end{align*}
and \eqref{eq:3.11} holds from \eqref{eq:3.10}, \eqref{eq:3.12}, \eqref{eq:3.13} and \eqref{eq:3.14}. The proof is complete.
\end{proof}

\begin{remark}
Let us note that if $\overline{x}_{n}$ is bounded then condition $n \lvert \overline{x}_{n} \rvert (\lvert \overline{x}_{n} \rvert \vee 1)/\sum_{k=1}^{n} (x_{k} - \overline{x}_{n})^{2} = o(1)$, $n \rightarrow \infty$ can be dropped (that is, $n/\sum_{k=1}^{n} (x_{k} - \overline{x}_{n})^{2} = o(1)$, $n \rightarrow \infty$ is sufficient to obtain strong consistency of both estimators $\widehat{\alpha}_{n}$ and $\widehat{\beta}_{n}$).
\end{remark}

\subsection{Multiple regression model}

\indent

Consider the multiple regression model
\begin{equation} \label{eq:3.15}
\mathbf{y}_{n} = \mathbf{X}_{n} \boldsymbol{\beta} + \boldsymbol{\varepsilon}_{n}
\end{equation}
where $\mathbf{X}_{n} = \big(x_{ij} \big)_{1 \leqslant i \leqslant n, 1 \leqslant j \leqslant p}$ is a known $n \times p$ matrix of rank $p$, $\boldsymbol{\beta} = (\beta_{1}, \ldots, \beta_{p})'$ is the $p$-dimensional parameter vector, $\boldsymbol{\varepsilon}_{n} = (\varepsilon_{1}, \ldots, \varepsilon_{n})'$ the $n$-dimensional error vector and $\mathbf{y}_{n} = (y_{1}, \ldots, y_{n})'$ the $n$-dimensional observation vector with prime denoting transpose. For $n \geqslant p$,
\begin{equation*}
\boldsymbol{\widehat{\beta}}_{n} = \boldsymbol{\beta} + (\mathbf{X}_{n}' \mathbf{X}_{n})^{-1} \mathbf{X}_{n}' \boldsymbol{\varepsilon}_{n}
\end{equation*}
is the least-squares estimate of $\boldsymbol{\beta}$.

\subsubsection{Non-stochastic regressors}

\indent

The strong consistency of the least squares estimates in multiple regression models having non-stochastic regressors was studied in the past by many authors (see, \citealt{Drygas76}, \citealt{Guijing81} or \citealt{Lai79}, among others). In the following, the strong consistency for least-squares estimators of unknown parameter vector is given. It extends Theorem 1 of \citealt{Lita13} to sequences $\{\varepsilon_{n}, \, n \geqslant 1 \}$ of pairwise PQD random variables.

\begin{theorem}\label{thr:4}
Suppose that in model \eqref{eq:3.15}, $\{\varepsilon_{n}, \, n \geqslant 1 \}$ is a sequence of identically distributed pairwise PQD random variables such that $\varepsilon_{1} \in \mathscr{L}_{r}$ for some  $1 \leqslant r < 2$ and $\mathbb{E} \, \varepsilon_{1} = 0$. If $\mathbf{X}_{n}' \mathbf{X}_{n}$ is non-singular for some $n \geqslant n_{0}$, the design levels $\{x_{ij}, \, 1 \leqslant j \leqslant p, i \geqslant 1 \}$ satisfy $\sup_{n \geqslant 1} \sum_{k=1}^{n} x_{kj}^{2}/n < \infty$ for every $j$,
\begin{itemize}
\item[\textnormal{(i)}] $\left[(\mathbf{X}_{n}' \mathbf{X}_{n})^{-1} \right]_{jj} = O\left(n^{-1} \right)$ as $n \rightarrow \infty$ for all $j$ and
    \begin{equation*}
    \sum_{1 \leqslant k < \ell \leqslant \infty} \lvert x_{ki} x_{\ell j} \rvert \int_{\ell}^{\infty} t^{-3} G_{\varepsilon_{k},\varepsilon_{\ell}}(t) \, \mathrm{d}t < \infty \qquad (i,j=1,\ldots,p)
    \end{equation*}
    when $r = 1$,
\end{itemize}
or
\begin{itemize}
\item[\textnormal{(ii)}] $\left[(\mathbf{X}_{n}' \mathbf{X}_{n})^{-1} \right]_{jj} = O \left(n^{-1/r} \, \Log^{-2(r - 1)/r} n \right)$ as $n \rightarrow \infty$ for all $j$ and
    \begin{equation*}
    \sum_{1 \leqslant k < \ell \leqslant \infty} \lvert x_{ki} x_{\ell j} \rvert \int_{\frac{\ell^{1/r}}{\Log^{2/r} \ell}}^{\infty} \frac{G_{\varepsilon_{k},\varepsilon_{\ell}}(t)}{t^{3} \Log^{2} \, t} \, \mathrm{d}t < \infty \qquad (i,j = 1,\ldots,p)
    \end{equation*}
    whenever $1 < r < 2$,
\end{itemize}
then $\boldsymbol{\widehat{\beta}}_{n} \overset{\textnormal{a.s.}}{\longrightarrow} \boldsymbol{\beta}$.
\end{theorem}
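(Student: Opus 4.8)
The plan is to reduce the convergence of $\boldsymbol{\widehat{\beta}}_{n}$ to that of the weighted sums already treated in Theorems~\ref{thr:1} and~\ref{thr:2}. For $n$ large enough that $\mathbf{X}_{n}' \mathbf{X}_{n}$ is non-singular (this happens for all sufficiently large $n$, since adding the rank-one positive semidefinite matrix formed from the $(n+1)$-th row keeps a Gram matrix positive definite once it is), one has for each $j = 1,\ldots,p$
\begin{equation*}
\big[\boldsymbol{\widehat{\beta}}_{n} - \boldsymbol{\beta}\big]_{j} = \big[(\mathbf{X}_{n}' \mathbf{X}_{n})^{-1} \mathbf{X}_{n}' \boldsymbol{\varepsilon}_{n}\big]_{j} = \sum_{i=1}^{p} \big[(\mathbf{X}_{n}' \mathbf{X}_{n})^{-1}\big]_{ji} \sum_{k=1}^{n} x_{ki} \varepsilon_{k}.
\end{equation*}
Since $\{\varepsilon_{n}, \, n \geqslant 1\}$ is identically distributed it is stochastically dominated by $X := \varepsilon_{1} \in \mathscr{L}_{r}$ (with constant $1$), and $\mathbb{E}\,\varepsilon_{n} = \mathbb{E}\,\varepsilon_{1} = 0$ for every $n$; hence the data of Theorem~\ref{thr:4} provide precisely the hypotheses needed to run Theorem~\ref{thr:1} or Theorem~\ref{thr:2} with the weight sequence $a_{k} = x_{ki}$, for each fixed index $i$.

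Concretely, in case (i) the assumptions $\sup_{n \geqslant 1} n^{-1} \sum_{k=1}^{n} x_{ki}^{2} < \infty$ and, taking both indices equal to $i$ in the displayed condition, $\sum_{1 \leqslant k < \ell \leqslant \infty} \lvert x_{ki} x_{\ell i} \rvert \int_{\ell}^{\infty} t^{-3} G_{\varepsilon_{k},\varepsilon_{\ell}}(t)\,\mathrm{d}t < \infty$ are exactly \eqref{eq:2.2} for $a_{k} = x_{ki}$, so Theorem~\ref{thr:1} gives $n^{-1} \sum_{k=1}^{n} x_{ki} \varepsilon_{k} \overset{\textnormal{a.s.}}{\longrightarrow} 0$ (using $\mathbb{E}\,\varepsilon_{k} = 0$) for every $i$. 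In case (ii), Theorem~\ref{thr:2} applied with the exponent $p$ there equal to $r$ and with weights $a_{k} = x_{ki}$ — for which \eqref{eq:2.9} is the displayed condition of (ii) — gives $(n^{1/r} \Log \, n)^{-1} \sum_{k=1}^{n} x_{ki} \varepsilon_{k} \overset{\textnormal{a.s.}}{\longrightarrow} 0$ for every $i$.

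It remains to absorb the matrix coefficients. In case (i) write
\begin{equation*}
\big[\boldsymbol{\widehat{\beta}}_{n} - \boldsymbol{\beta}\big]_{j} = \sum_{i=1}^{p} \Big(n \big[(\mathbf{X}_{n}' \mathbf{X}_{n})^{-1}\big]_{ji}\Big) \cdot \frac{1}{n} \sum_{k=1}^{n} x_{ki} \varepsilon_{k},
\end{equation*}
and note that $(\mathbf{X}_{n}' \mathbf{X}_{n})^{-1}$ is symmetric positive definite, so its $2 \times 2$ principal minors are nonnegative, whence $\big\lvert [(\mathbf{X}_{n}' \mathbf{X}_{n})^{-1}]_{ji} \big\rvert \leqslant \big([(\mathbf{X}_{n}' \mathbf{X}_{n})^{-1}]_{jj} [(\mathbf{X}_{n}' \mathbf{X}_{n})^{-1}]_{ii}\big)^{1/2} = O(1/n)$ by the hypothesis $[(\mathbf{X}_{n}' \mathbf{X}_{n})^{-1}]_{jj} = O(1/n)$. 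Thus each of the $p$ summands is $O(1)$ times a null sequence a.s., so the finite sum tends to $0$ a.s.; this proves $\boldsymbol{\widehat{\beta}}_{n} \overset{\textnormal{a.s.}}{\longrightarrow} \boldsymbol{\beta}$. Case (ii) is identical after replacing the normalising factor $n$ by $n^{1/r} \Log \, n$ and using $[(\mathbf{X}_{n}' \mathbf{X}_{n})^{-1}]_{jj} = O(n^{-1/r} \Log^{-1} \, n)$.

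The only step beyond routine bookkeeping is the off-diagonal estimate $\lvert [(\mathbf{X}_{n}' \mathbf{X}_{n})^{-1}]_{ji} \rvert \leqslant ([(\mathbf{X}_{n}' \mathbf{X}_{n})^{-1}]_{jj} [(\mathbf{X}_{n}' \mathbf{X}_{n})^{-1}]_{ii})^{1/2}$, which is what allows the growth conditions to be imposed only on the diagonal entries; one should also record at the outset that $\boldsymbol{\widehat{\beta}}_{n}$ is defined only for $n$ past the first index where $\mathbf{X}_{n}' \mathbf{X}_{n}$ becomes (and stays) non-singular, which is immaterial for an almost sure limit. Everything else is a direct appeal to Theorems~\ref{thr:1} and~\ref{thr:2}.
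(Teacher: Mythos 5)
Your proposal is correct and follows essentially the same route as the paper: reduce $\boldsymbol{\widehat{\beta}}_{n} - \boldsymbol{\beta}$ to the weighted sums $\sum_{k=1}^{n} x_{ki}\varepsilon_{k}$, invoke Theorem~\ref{thr:1} (case (i)) or Theorem~\ref{thr:2} with $p=r$ (case (ii)) using the columns of $\mathbf{X}_{n}$ as weights, and control the off-diagonal entries of $(\mathbf{X}_{n}'\mathbf{X}_{n})^{-1}$ by the positive-definiteness bound $\lvert [(\mathbf{X}_{n}'\mathbf{X}_{n})^{-1}]_{ij}\rvert \leqslant [(\mathbf{X}_{n}'\mathbf{X}_{n})^{-1}]_{ii}^{1/2}[(\mathbf{X}_{n}'\mathbf{X}_{n})^{-1}]_{jj}^{1/2}$, exactly as the paper does (citing Harville). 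The small additional observations you make (stochastic domination by $\varepsilon_{1}$ with constant $1$, non-singularity persisting for $n \geqslant n_{0}$) are correct and only make explicit what the paper leaves implicit.
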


\begin{proof}
From the expression of $\boldsymbol{\widehat{\beta}}_{n}$, it follows that the strong consistency of the least-squares estimate is equivalent to
\begin{equation*}
(\mathbf{X}_{n}' \mathbf{X}_{n})^{-1} \sum_{k=1}^{n} \mathbf{x}_{k} \varepsilon_{k} \overset{\textnormal{a.s.}}{\longrightarrow} \mathbf{0}
\end{equation*}
where $\mathbf{x}_{k} = (x_{k1},\ldots,x_{kp})'$. Since $(\mathbf{X}_{n}' \mathbf{X}_{n})^{-1}$, $n \geqslant n_{0}$ is symmetric positive-definite, we have
\begin{equation*}
\abs{\left[(\mathbf{X}_{n}' \mathbf{X}_{n})^{-1} \right]_{ij}} \leqslant \left[(\mathbf{X}_{n}' \mathbf{X}_{n})^{-1} \right]_{ii}^{1/2} \left[(\mathbf{X}_{n}' \mathbf{X}_{n})^{-1} \right]_{jj}^{1/2}
\end{equation*}
(see \citealt{Harville97}, page $280$). Hence,
\begin{align*}
& \abs{\left[(\mathbf{X}_{n}' \mathbf{X}_{n})^{-1} \right]_{ij} \sum_{k=1}^{n} x_{kj} \varepsilon_{k}} \leqslant \\
& \qquad \left[(\mathbf{X}_{n}' \mathbf{X}_{n})^{-1} \right]_{ii}^{1/2} \left[(\mathbf{X}_{n}' \mathbf{X}_{n})^{-1} \right]_{jj}^{1/2} \abs{\sum_{k=1}^{n} x_{kj} \varepsilon_{k}} \leqslant C \abs{\frac{1}{n} \sum_{k=1}^{n} x_{kj} \varepsilon_{k}} \overset{\textnormal{a.s.}}{\longrightarrow} 0
\end{align*}
when $r = 1$ by Theorem~\ref{thr:1}; from Theorem~\ref{thr:2}, we get
\begin{align*}
& \abs{\left[(\mathbf{X}_{n}' \mathbf{X}_{n})^{-1} \right]_{ij} \sum_{k=1}^{n} x_{kj} \varepsilon_{k}} \leqslant \\
& \qquad \left[(\mathbf{X}_{n}' \mathbf{X}_{n})^{-1} \right]_{ii}^{1/2} \left[(\mathbf{X}_{n}' \mathbf{X}_{n})^{-1} \right]_{jj}^{1/2} \abs{\sum_{k=1}^{n} x_{kj} \varepsilon_{k}} \leqslant C \abs{\frac{1}{n^{1/r} \, \Log^{2(r - 1)/r} n} \sum_{k=1}^{n} x_{kj} \varepsilon_{k}} \overset{\textnormal{a.s.}}{\longrightarrow} 0
\end{align*}
whenever $1 < r < 2$ establishing the thesis.
\end{proof}

\subsubsection{Stochastic regressors}

\indent

In model \eqref{eq:3.15}, let us assume that the design levels $\{x_{ij}, 1 \leqslant j \leqslant p, i \geqslant 1 \}$ are random variables. If the errors $\varepsilon_{1}, \varepsilon_{2},\ldots$ are pairwise PQD and identically distributed random variables then we can use Theorem~\ref{thr:1} to prove the strong consistency of $\boldsymbol{\widehat{\beta}}_{n}$.

In what follows, we shall define $\rho(\mathbf{A}) = \sup \left\{\abs{\lambda}\colon \lambda \in \mathrm{Spec}(\mathbf{A}) \right\}$ where $\mathrm{Spec}(\mathbf{A})$ is the spectrum of the matrix $\mathbf{A} = \big(a_{ij} \big)_{1 \leqslant i,j \leqslant p}$. The column space of the matrix $\mathbf{M} = \big(m_{ij} \big)_{1 \leqslant i \leqslant n, 1 \leqslant j \leqslant p}$ will be indicated by $\mathrm{Col}(\mathbf{M})$. Given a $n$-dimensional vector $\mathbf{a}$ we shall use $\norm{\mathbf{a}}$ to denote the \emph{Euclidean} vector norm, that is, $\norm{\mathbf{a}} = \sqrt{\mathbf{a}' \mathbf{a}}$.

\begin{theorem}\label{thr:5}
Suppose that in model \eqref{eq:3.15}, $\{\varepsilon_{n}, \, n \geqslant 1 \}$ is a sequence of pairwise PQD random variables stochastically dominated by a random variable $\varepsilon \in \mathscr{L}_{2}$ satisfying \eqref{eq:3.4}. If $\{x_{ij} \}$ $(i=1,2,\ldots; j=1,\ldots,p)$ is an arbitrary double array of random variables such that $\mathbf{X}_{n}' \mathbf{X}_{n}$ is non-singular a.s. for some $n \geqslant p$ and $n \rho \left((\mathbf{X}_{n}' \mathbf{X}_{n})^{-1} \right) \overset{\textnormal{a.s.}}{\longrightarrow} 0$, then $\boldsymbol{\widehat{\beta}}_{n} \overset{\textnormal{a.s.}}{\longrightarrow} \boldsymbol{\beta}$.
\end{theorem}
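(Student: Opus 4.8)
The plan is to avoid any direct strong law for the ``normal-equation'' vector $\mathbf{X}_{n}' \boldsymbol{\varepsilon}_{n}$ — which would be hopeless here, since the random design and the errors are not assumed independent — and instead to route the bound through the least-squares residual projection together with the smallest eigenvalue of $\mathbf{X}_{n}' \mathbf{X}_{n}$, ultimately reducing everything to a scalar strong law for $\sum_{k=1}^{n} \varepsilon_{k}^{2}$ that is already a by-product of Theorem~\ref{thr:1}.

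First I would note that, on the a.s.\ event where $\mathbf{X}_{n}' \mathbf{X}_{n}$ is non-singular (which, being so for some $n \geqslant p$, holds for all sufficiently large $n$ because the rank is non-decreasing), $\boldsymbol{\widehat{\beta}}_{n} - \boldsymbol{\beta} = (\mathbf{X}_{n}' \mathbf{X}_{n})^{-1} \mathbf{X}_{n}' \boldsymbol{\varepsilon}_{n}$, so that $\mathbf{X}_{n}(\boldsymbol{\widehat{\beta}}_{n} - \boldsymbol{\beta}) = \mathbf{X}_{n}(\mathbf{X}_{n}' \mathbf{X}_{n})^{-1} \mathbf{X}_{n}' \boldsymbol{\varepsilon}_{n}$ is the orthogonal projection of $\boldsymbol{\varepsilon}_{n}$ onto $\mathrm{Col}(\mathbf{X}_{n})$; hence $\norm{\mathbf{X}_{n}(\boldsymbol{\widehat{\beta}}_{n} - \boldsymbol{\beta})} \leqslant \norm{\boldsymbol{\varepsilon}_{n}} = \big( \sum_{k=1}^{n} \varepsilon_{k}^{2} \big)^{1/2}$. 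Since $\mathbf{X}_{n}' \mathbf{X}_{n}$ is symmetric positive definite a.s.\ with smallest eigenvalue equal to $\rho\big( (\mathbf{X}_{n}' \mathbf{X}_{n})^{-1} \big)^{-1}$, one has $\norm{\mathbf{X}_{n} \mathbf{u}}^{2} = \mathbf{u}' (\mathbf{X}_{n}' \mathbf{X}_{n}) \mathbf{u} \geqslant \norm{\mathbf{u}}^{2} / \rho\big( (\mathbf{X}_{n}' \mathbf{X}_{n})^{-1} \big)$ for every $\mathbf{u} \in \mathbb{R}^{p}$; taking $\mathbf{u} = \boldsymbol{\widehat{\beta}}_{n} - \boldsymbol{\beta}$ and combining with the previous inequality gives the key estimate
\begin{equation*}
\norm{\boldsymbol{\widehat{\beta}}_{n} - \boldsymbol{\beta}}^{2} \leqslant \rho\big( (\mathbf{X}_{n}' \mathbf{X}_{n})^{-1} \big) \sum_{k=1}^{n} \varepsilon_{k}^{2} \qquad \text{a.s.}
\end{equation*}

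Next I would invoke $\limsup_{n} n^{-1} \sum_{k=1}^{n} \varepsilon_{k}^{2} \leqslant 2\, \mathbb{E}\, \varepsilon^{2} < \infty$ a.s., which is exactly \eqref{eq:3.5} and which follows from applying Theorem~\ref{thr:1} to the pairwise PQD sequences $\{(\varepsilon_{k}^{+})^{2}\}$ and $\{(\varepsilon_{k}^{-})^{2}\}$ — stochastically dominated by $\varepsilon^{2} \in \mathscr{L}_{1}$ and satisfying the summability hypothesis of Theorem~\ref{thr:1} by \eqref{eq:3.4} together with the elementary bound $G_{(\varepsilon_{k}^{\pm})^{2},(\varepsilon_{j}^{\pm})^{2}}(t) \leqslant 4 t\, G_{\varepsilon_{k}^{\pm},\varepsilon_{j}^{\pm}}(\sqrt{t})$ derived in the previous proof. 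Thus $\sum_{k=1}^{n} \varepsilon_{k}^{2} = O(n)$ a.s., and combining this with the hypothesis $\rho\big( (\mathbf{X}_{n}' \mathbf{X}_{n})^{-1} \big) = o(n^{-1})$ a.s.\ yields $\norm{\boldsymbol{\widehat{\beta}}_{n} - \boldsymbol{\beta}}^{2} \leqslant o(n^{-1}) \cdot O(n) = o(1)$ a.s., which is the assertion; it only remains to intersect the finitely many exceptional null sets (non-singularity of $\mathbf{X}_{n}' \mathbf{X}_{n}$ for large $n$, the eigenvalue rate, the strong law for $\varepsilon_{k}^{2}$).

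The hard part will be the very first reduction: recognizing that, in contrast with Theorem~\ref{thr:4} where the regressors are deterministic and one can apply the strong law coordinatewise to $\sum_{k} x_{kj} \varepsilon_{k}$, here no such control of $\mathbf{X}_{n}' \boldsymbol{\varepsilon}_{n}$ is available, and one must instead exploit that $\mathbf{X}_{n}(\boldsymbol{\widehat{\beta}}_{n} - \boldsymbol{\beta})$ is a contraction of $\boldsymbol{\varepsilon}_{n}$ and pay for it with the reciprocal of the least eigenvalue of $\mathbf{X}_{n}' \mathbf{X}_{n}$. Everything after that observation is a short computation plus measurability bookkeeping.
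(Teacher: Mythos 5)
Your proof is correct and follows essentially the same route as the paper: the key estimate $\norm{\boldsymbol{\widehat{\beta}}_{n} - \boldsymbol{\beta}}^{2} \leqslant \rho\big((\mathbf{X}_{n}' \mathbf{X}_{n})^{-1}\big) \sum_{k=1}^{n} \varepsilon_{k}^{2}$ combined with \eqref{eq:3.5} and the hypothesis $\rho\big((\mathbf{X}_{n}' \mathbf{X}_{n})^{-1}\big) = o(n^{-1})$ a.s. The only difference is cosmetic: where the paper cites Proposition 1 of \cite{Lita14} and bounds the projection norm via a Gram--Schmidt basis and Cauchy--Schwarz, you re-derive the same inequality directly from the smallest-eigenvalue bound and the fact that $\mathbf{X}_{n}(\boldsymbol{\widehat{\beta}}_{n} - \boldsymbol{\beta})$ is the orthogonal projection of $\boldsymbol{\varepsilon}_{n}$, which is a perfectly valid (and self-contained) substitute.
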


\begin{proof}
From Proposition 1 of \citealt{Lita14}, we have
\begin{equation}\label{eq:3.16}
\lvert \! \lvert \boldsymbol{\widehat{\beta}}_{n} - \boldsymbol{\beta} \rvert \! \rvert^{2} \leqslant \rho \left((\mathbf{X}_{n}' \mathbf{X}_{n})^{-1} \right) \lvert \! \lvert \mathbf{P}_{\mathrm{Col}(\mathbf{X}_{n})} \boldsymbol{\varepsilon}_{n} \rvert \! \rvert^{2} \quad \text{a.s.}
\end{equation}
where $\mathbf{P}_{\mathrm{Col}(\mathbf{X}_{n})} \boldsymbol{\varepsilon}_{n}$ is the orthogonal projection of $\boldsymbol{\varepsilon}_{n}$ on $\mathrm{Col}(\mathbf{X}_{n})$. Using Gram-Schmidt process we can construct an orthonormal basis $\left\{\mathbf{w}_{n,1}, \ldots, \mathbf{w}_{n,p} \right\}$ of $\mathrm{Col}(\mathbf{X}_{n})$ such that
\begin{equation}\label{eq:3.17}
\lvert \! \lvert \mathbf{P}_{\mathrm{Col}(\mathbf{X}_{n})} \boldsymbol{\varepsilon}_{n} \rvert \! \rvert^{2} = \langle \mathbf{w}_{n,1}, \boldsymbol{\varepsilon}_{n} \rangle^{2} + \ldots + \langle \mathbf{w}_{n,p}, \boldsymbol{\varepsilon}_{n} \rangle^{2}
\end{equation}
where $\langle \; \cdot \; , \; \cdot \; \rangle$ denotes the usual inner product in $\mathbb{R}^{n}$. From Cauchy-Schwarz inequality we have, for each $j=1,\ldots,p$,
\begin{equation}\label{eq:3.18}
\rho \left((\mathbf{X}_{n}' \mathbf{X}_{n})^{-1} \right) \langle \mathbf{w}_{n,j}, \boldsymbol{\varepsilon}_{n} \rangle^{2} \leqslant \rho \left((\mathbf{X}_{n}' \mathbf{X}_{n})^{-1} \right) \lvert \! \lvert \mathbf{w}_{n,j} \rvert \! \rvert^{2} \lvert \! \lvert \boldsymbol{\varepsilon}_{n} \rvert \! \rvert^{2} = n \rho \left((\mathbf{X}_{n}' \mathbf{X}_{n})^{-1} \right) \cdot \frac{1}{n} \sum_{k=1}^{n} \varepsilon_{k}^{2} \overset{\textnormal{a.s.}}{\longrightarrow} 0
\end{equation}
via \eqref{eq:3.5}. By \eqref{eq:3.16}, \eqref{eq:3.17} and \eqref{eq:3.18}, it follows
\begin{equation*}
\norm{\mathbf{b}_{n} - \boldsymbol{\beta}}^{2} \leqslant \rho \left((\mathbf{X}_{n}' \mathbf{X}_{n})^{-1} \right) \left[\langle \mathbf{w}_{n,1}, \boldsymbol{\varepsilon}_{n} \rangle^{2} + \ldots + \langle \mathbf{w}_{n,p}, \boldsymbol{\varepsilon}_{n} \rangle^{2} \right] \overset{\textnormal{a.s.}}{\longrightarrow} 0.
\end{equation*}
The proof is complete.
\end{proof}

\subsection{Simple ridge regression model}

\indent

In model \eqref{eq:3.15}, suppose $p=1$ and the ridge estimator
\begin{equation*}
\widehat{\gamma}_{n} = \left(\sum_{j=1}^{n} x_{j}^{2} + \kappa \right)^{-1} \mathbf{x}_{n}' \mathbf{y}_{n}
\end{equation*}
where $\kappa = \widehat{\sigma}_{n}^{2}/\widehat{\beta}_{n}^{2}$, $\mathbf{x}_{n} = (x_{1},\ldots,x_{n})'$ and $\widehat{\sigma}_{n}^{2} = (\mathbf{y}_{n} - \mathbf{x}_{n} \widehat{\beta}_{n})' (\mathbf{y}_{n} - \mathbf{x}_{n} \widehat{\beta}_{n})/(n - 1)$ (see \citealt{Grob03}, page $9$).

\begin{theorem}\label{thr:6}
Suppose model \eqref{eq:3.15} with $p=1$ and $\{\varepsilon_{n}, \, n \geqslant 1 \}$ a sequence of pairwise PQD random variables stochastically dominated by a random variable $\varepsilon \in \mathscr{L}_{2}$ satisfying \eqref{eq:3.4}. If $\{x_{n}, \, n \geqslant 1 \}$ is an arbitrary sequence of random variables such that $\sum_{j=1}^{n} x_{j}^{2} \neq 0$ a.s. for some $n \geqslant 1$ and $n/\sum_{j=1}^{n} x_{j}^{2} \overset{\textnormal{a.s.}}{\longrightarrow} 0$, then $\widehat{\gamma}_{n} \overset{\textnormal{a.s.}}{\longrightarrow} \beta$.
\end{theorem}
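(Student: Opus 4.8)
The plan is to reduce the ridge estimator $\widehat{\gamma}_{n}$ to the ordinary least-squares estimator $\widehat{\beta}_{n} = \big(\sum_{j=1}^{n} x_{j}^{2}\big)^{-1} \mathbf{x}_{n}' \mathbf{y}_{n}$ multiplied by a scalar shrinkage factor lying in $[0,1]$, and then to invoke Theorem~\ref{thr:5} in the case $p = 1$. Indeed, for $p = 1$ one has $\mathbf{X}_{n} = \mathbf{x}_{n}$, $\mathbf{X}_{n}' \mathbf{X}_{n} = \sum_{j=1}^{n} x_{j}^{2}$ and $\rho\big((\mathbf{X}_{n}' \mathbf{X}_{n})^{-1}\big) = \big(\sum_{j=1}^{n} x_{j}^{2}\big)^{-1}$, so the hypotheses of the present statement are exactly those of Theorem~\ref{thr:5}; hence $\widehat{\beta}_{n} \overset{\textnormal{a.s.}}{\longrightarrow} \beta$. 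Writing $\mathbf{y}_{n} = \mathbf{x}_{n} \beta + \boldsymbol{\varepsilon}_{n}$ gives $\mathbf{x}_{n}' \mathbf{y}_{n} = \widehat{\beta}_{n} \sum_{j=1}^{n} x_{j}^{2} = \beta \sum_{j=1}^{n} x_{j}^{2} + \mathbf{x}_{n}' \boldsymbol{\varepsilon}_{n}$, and since $\kappa = \widehat{\sigma}_{n}^{2}/\widehat{\beta}_{n}^{2} \geqslant 0$ and $\sum_{j=1}^{n} x_{j}^{2} > 0$ a.s. for all large $n$ (because $\sum_{j=1}^{n_{0}} x_{j}^{2} \neq 0$ a.s. for some $n_{0}$ and the summands are non-negative),
\[
\widehat{\gamma}_{n} = \theta_{n}\, \widehat{\beta}_{n}, \qquad \theta_{n} := \frac{\sum_{j=1}^{n} x_{j}^{2}}{\sum_{j=1}^{n} x_{j}^{2} + \kappa} \in [0,1].
\]

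Next I would split on whether $\beta$ vanishes. If $\beta = 0$ the conclusion is immediate from $\lvert \widehat{\gamma}_{n} \rvert = \theta_{n} \lvert \widehat{\beta}_{n} \rvert \leqslant \lvert \widehat{\beta}_{n} \rvert \to 0$ a.s. If $\beta \neq 0$, it suffices to prove $\theta_{n} \to 1$ a.s., i.e.
\[
\frac{\kappa}{\sum_{j=1}^{n} x_{j}^{2}} = \frac{\widehat{\sigma}_{n}^{2}}{\widehat{\beta}_{n}^{2} \sum_{j=1}^{n} x_{j}^{2}} \overset{\textnormal{a.s.}}{\longrightarrow} 0,
\]
since then $\widehat{\gamma}_{n} = \theta_{n} \widehat{\beta}_{n} \to \beta$. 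In this ratio $\widehat{\beta}_{n}^{2} \to \beta^{2} > 0$ a.s. and $\sum_{j=1}^{n} x_{j}^{2} \to \infty$ a.s. (because $\big(\sum_{j=1}^{n} x_{j}^{2}\big)^{-1} = o(n^{-1}) \to 0$), so the whole matter reduces to showing that $\widehat{\sigma}_{n}^{2}$ is a.s. bounded.

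For that I would use $\mathbf{x}_{n}' \boldsymbol{\varepsilon}_{n} = (\widehat{\beta}_{n} - \beta) \sum_{j=1}^{n} x_{j}^{2}$ to expand the residual sum of squares,
\[
(n - 1)\, \widehat{\sigma}_{n}^{2} = \big\lVert \boldsymbol{\varepsilon}_{n} - \mathbf{x}_{n}(\widehat{\beta}_{n} - \beta) \big\rVert^{2} = \sum_{k=1}^{n} \varepsilon_{k}^{2} - (\widehat{\beta}_{n} - \beta)^{2} \sum_{k=1}^{n} x_{k}^{2} \leqslant \sum_{k=1}^{n} \varepsilon_{k}^{2},
\]
so that $\widehat{\sigma}_{n}^{2} \leqslant \tfrac{n}{n-1} \cdot \tfrac{1}{n} \sum_{k=1}^{n} \varepsilon_{k}^{2}$, whence $\limsup_{n} \widehat{\sigma}_{n}^{2} \leqslant 2\, \mathbb{E}\, \varepsilon^{2}$ a.s. by \eqref{eq:3.5} (which, recall, follows from Theorem~\ref{thr:1} applied to the pairwise PQD sequences $\{(\varepsilon_{k}^{+})^{2}\}$ and $\{(\varepsilon_{k}^{-})^{2}\}$ together with \eqref{eq:3.4}). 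Combining this with $\widehat{\beta}_{n}^{2} \to \beta^{2} > 0$ and $\sum_{j=1}^{n} x_{j}^{2} \to \infty$ gives $\theta_{n} \to 1$ a.s., and therefore $\widehat{\gamma}_{n} \overset{\textnormal{a.s.}}{\longrightarrow} \beta$. The one point requiring care is precisely the dichotomy on $\beta$: when $\beta = 0$ the penalty $\kappa$ need not converge (indeed may be infinite), but the crude bound $\lvert \widehat{\gamma}_{n} \rvert \leqslant \lvert \widehat{\beta}_{n} \rvert$ sidesteps it; when $\beta \neq 0$ the argument hinges entirely on the a.s. boundedness of $\widehat{\sigma}_{n}^{2}$, that is, on \eqref{eq:3.5}.
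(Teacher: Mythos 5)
Your argument is correct, and it reaches the conclusion by a somewhat different organization than the paper. The paper expands $\widehat{\gamma}_{n}$ additively as $\theta_{n}\beta + \theta_{n}\sum_{j}x_{j}\varepsilon_{j}/\sum_{j}x_{j}^{2}$ (with $\theta_{n}$ your shrinkage factor), treats the noise term separately via the Cauchy--Schwarz bound \eqref{eq:3.20} together with \eqref{eq:3.5}, and proves $\theta_{n}\to 1$ through \eqref{eq:3.19}, where $\widehat{\sigma}_{n}^{2}/\sum_{j}x_{j}^{2}\to 0$ is obtained from the projection bound $\widehat{\sigma}_{n}^{2}\leqslant \norm{\boldsymbol{\varepsilon}_{n}}^{2}/(n-1)$ and Kronecker's lemma applied to $\sum_{n}\varepsilon_{n}^{2}/n^{2}$. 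You instead use the multiplicative identity $\widehat{\gamma}_{n}=\theta_{n}\widehat{\beta}_{n}$, so the noise term is absorbed into $\widehat{\beta}_{n}-\beta$ and handled once and for all by Theorem~\ref{thr:5}; your bound $(n-1)\widehat{\sigma}_{n}^{2}=\sum_{k}\varepsilon_{k}^{2}-(\widehat{\beta}_{n}-\beta)^{2}\sum_{k}x_{k}^{2}\leqslant\sum_{k}\varepsilon_{k}^{2}$ is the same estimate the paper gets from the projection, and you then use \eqref{eq:3.5} and $\sum_{j}x_{j}^{2}/n\to\infty$ rather than Kronecker. The one genuine improvement in your write-up is the dichotomy on $\beta$: the paper's step \eqref{eq:3.19} multiplies by $1/\widehat{\beta}_{n}^{2}$ and tacitly needs $\liminf_{n}\lvert\widehat{\beta}_{n}\rvert>0$, i.e.\ $\beta\neq 0$, whereas your crude bound $\lvert\widehat{\gamma}_{n}\rvert=\theta_{n}\lvert\widehat{\beta}_{n}\rvert\leqslant\lvert\widehat{\beta}_{n}\rvert$ disposes of the case $\beta=0$ (where $\kappa$ may even be infinite) cleanly; in the paper that case is covered only implicitly, since the $\beta$-term then vanishes and $\theta_{n}\in[0,1]$ keeps the noise term small, but this is not spelled out. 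So your proof buys a slightly tighter logical structure at no extra cost, while the paper's version isolates the explicit rate statement $\widehat{\sigma}_{n}^{2}/\sum_{j}x_{j}^{2}=o\bigl(\sum_{j}\varepsilon_{j}^{2}/n^{2}\bigr)$, which it reuses for the shrinkage-estimator remark that follows.
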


\begin{proof}
We have
\begin{equation*}
\left(\sum_{j=1}^{n} x_{j}^{2} + \kappa \, \right)^{-1} = \left(1 - \frac{\kappa/\sum_{j=1}^{n} x_{j}^{2}}{1 + \kappa/\sum_{j=1}^{n} x_{j}^{2}} \right) \left(\sum_{j=1}^{n} x_{j}^{2} \right)^{-1}.
\end{equation*}
which yields
\begin{equation*}
\widehat{\gamma}_{n} = \left(1 - \frac{\kappa/\sum_{j=1}^{n} x_{j}^{2}}{1 + \kappa/\sum_{j=1}^{n} x_{j}^{2}} \right)  \beta + \left(1 - \frac{\kappa/\sum_{j=1}^{n} x_{j}^{2}}{1 + \kappa/\sum_{j=1}^{n} x_{j}^{2}} \right) \cdot \frac{\sum_{j=1}^{n} x_{j} \varepsilon_{j}}{\sum_{j=1}^{n} x_{j}^{2}}.
\end{equation*}
Since,
\begin{equation*}
\widehat{\sigma}_{n}^{2} = \frac{\norm{[\mathbf{I}_{n} - \mathbf{x}_{n} (\mathbf{x}_{n}' \mathbf{x}_{n})^{-1} \mathbf{x}_{n}'] \boldsymbol{\varepsilon}_{n}}^{2}}{n - 1} \leqslant \frac{\norm{\boldsymbol{\varepsilon}_{n}}^{2}}{n - 1}
\end{equation*}
it follows
\begin{equation*}
\frac{\widehat{\sigma}_{n}^{2}}{\sum_{j=1}^{n} x_{j}^{2}} \leqslant \frac{\sum_{j=1}^{n} \varepsilon_{j}^{2}}{(n - 1) \sum_{j=1}^{n} x_{j}^{2}} = \frac{n}{n - 1} \cdot \frac{\sum_{j=1}^{n} \varepsilon_{j}^{2}}{n^{2}} \cdot \frac{n}{\sum_{j=1}^{n} x_{j}^{2}}
\end{equation*}
Recall that $\sum_{j=1}^{n} \varepsilon_{j}^{2}/n^{2} \overset{\textnormal{a.s.}}{\longrightarrow} 0$ via Kronecker's lemma provided that $\sum_{n=1}^{\infty} \mathbb{E} \, \varepsilon_{n}^{2}/n^{2} \leqslant C \, \mathbb{E} \, \varepsilon^{2} < \infty$. From Theorem~\ref{thr:5}, we obtain strong consistency of $\widehat{\beta}_{n}$ i.e. $\widehat{\beta}_{n} \overset{\textnormal{a.s.}}{\longrightarrow} \beta$. Thus,
\begin{equation}\label{eq:3.19}
\frac{\kappa}{\sum_{j=1}^{n} x_{j}^{2}} = \frac{1}{\widehat{\beta}_{n}^{2}} \cdot \frac{\widehat{\sigma}_{n}^{2}}{\sum_{j=1}^{n} x_{j}^{2}} \overset{\textnormal{a.s.}}{\longrightarrow} 0
\end{equation}
provided that $\beta \neq 0$; obviously, for $\beta = 0$ one have
\begin{equation}\label{eq:3.20}
\lvert \widehat{\gamma}_{n} \rvert = \left(\sum_{j=1}^{n} x_{j}^{2} + \kappa \right)^{-1} \abs{\sum_{j=1}^{n} x_{j} \varepsilon_{j}} \leqslant\frac{\big\lvert \sum_{j=1}^{n} x_{j} \varepsilon_{j} \big\rvert}{\sum_{j=1}^{n} x_{j}^{2}}.
\end{equation}
On the other hand,
\begin{equation}\label{eq:3.21}
\left(\frac{\sum_{j=1}^{n} x_{j} \varepsilon_{j}}{\sum_{j=1}^{n} x_{j}^{2}} \right)^{2} \leqslant \frac{\sum_{j=1}^{n} \varepsilon_{j}^{2}}{\sum_{j=1}^{n} x_{j}^{2}} = \frac{\sum_{j=1}^{n} \varepsilon_{j}^{2}}{n} \cdot \frac{n}{\sum_{j=1}^{n} x_{j}^{2}}
\end{equation}
and \eqref{eq:3.5} ensures $\sum_{j=1}^{n} x_{j} \varepsilon_{j}/\sum_{j=1}^{n} x_{j}^{2} \overset{\textnormal{a.s.}}{\longrightarrow} 0$ because $n/\sum_{j=1}^{n} x_{j}^{2} \overset{\textnormal{a.s.}}{\longrightarrow} 0$. Hence, \eqref{eq:3.19} (or \eqref{eq:3.20}) and \eqref{eq:3.21} lead to
$\widehat{\gamma}_{n} \overset{\textnormal{a.s.}}{\longrightarrow} \beta$ and
the strong consistency of the ridge estimator $\widehat{\gamma}_{n}$ is established.
\end{proof}

\begin{remark}
Similarly, under the assumptions of Theorem~\ref{thr:6} we can conclude also the strong consistency of the shrinkage estimator $\widehat{\theta}_{n} = \widehat{\beta}_{n}/(1 + \varrho)$, where $\varrho = \left(\sum_{j=1}^{n} x_{j}^{2} \right)^{-1} \widehat{\sigma}_{n}^{2}/\widehat{\beta}_{n}^{2}$ (see \citealt{Grob03}, page $9$).
\end{remark}

\begin{acknowledgements}
The author is grateful to Editor-in-Chief Prof. Narayanaswamy Balakrishnan and Editors who handled the submitted manuscript. The author also wishes to express his gratitude to both Referees for the very careful reading of the manuscript, which resulted in elimination of several typos and improved readability of this final version.

This work is a contribution to the Project UIDB/04035/2020, funded by FCT - Funda\c{c}\~{a}o para a Ci\^{e}ncia e a Tecnologia, Portugal.
\end{acknowledgements}


\begin{thebibliography}{}


\bibitem[Bingham et al.(1987)]{Bingham87} Bingham, N. H., Goldie, C. M., and J. L. Teugels. 1987. {\it Regular Variation}. Cambridge: Cambridge University Press.

\bibitem[Birkel(1989)]{Birkel89} Birkel, T. 1989. A note on the strong law of large numbers for positively dependent random variables. {\it Statist. Probab. Lett.} 7:17--20. doi:10.1016/0167-7152(88)90080-6.

\bibitem[Birkel(1993)]{Birkel93} Birkel, T. 1989. A functional central limit theorem for positively dependent random variables. {\it J. Multivariate Anal.} 44:314--320. doi:10.1006/jmva.1993.1018.

\bibitem[Bulinski and Shashkin(2007)]{Bulinski07} Bulinski, A., and A. Shashkin. 2007 {\it Limit Theorems for Associated Random Fields and Related Systems}. Singapore: Advanced Series on Statistical Science \& Applied Probability, Vol. 10, World Scientific. doi:10.1142/6555.

\bibitem[Chen and Sung(2019)]{Chen19} Chen, P., and S. H. Sung. 2019. Strong laws of large numbers for positively dependent random variables. {\it Rev. R. Acad. Cienc. Exactas F\'{\i}s. Nat. Ser. A Math. RACSAM} 113:3089--3100. doi:10.1007/s13398-019-00679-4.

\bibitem[Drygas(1976)]{Drygas76} Drygas, H. 1976. Weak and strong consistency of the least squares estimators in regression model. {\it Z. Wahrscheinlichkeitstheorie Verw. Gebiete} 34:119--127. doi:10.1007/BF00535679.

\bibitem[Esary et al.(1967)]{Esary67} Esary, J., F. Proschan, and D. Walkup. 1967. Association of random variables with applications. {\it Ann. Math. Statist.} 38:1466--1474. doi:10.1214/aoms/1177698701.

\bibitem[Gui-Jing et al.(1981)]{Guijing81} Gui-Jing, C., Lai, T. L., and C. Z. Wei. 1981. Convergence systems and strong consistency of least squares estimates in regression models. {\it J. Multivariate Anal.} 11:319--333. doi:10.1016/0047-259X(81)90078-6.

\bibitem[Fan et al.(2010)]{Fan10} Fan, G. L., Liang, H. Y., Wang, J. F., and H. X. Xu. 2010. Asymptotic properties for LS estimators in EV regression model with dependent errors, {\it AStA Adv. Stat. Anal.} 94:89--103. doi: 10.1007/s10182-010-0124-3.

\bibitem[Fuller(1987)]{Fuller87} Fuller, W. A. 1987. {\it Measurement Error Models}. New York: John Wiley \& Sons. doi:10.1002/9780470316665.

\bibitem[Gro{\ss}(2003)]{Grob03} Gro{\ss}, J. 2003. {\it Linear Regression}. Heidelberg: Springer-Verlag Berlin. doi:10.1007/978-3-642-55864-1.

\bibitem[Harville(1997)]{Harville97} Harville, D. A. 1997. {\it Matrix Algebra From a Statiscian's Perspective}. New York: Springer-Verlag. doi:10.1007/b98818.

\bibitem[Hu et al.(2017)]{Hu17} Hu, D., Chen, P., and S. H. Sung. 2017. Strong laws for weighted sums of $\psi$-mixing random variables and applications in errors-in-variables regression models. {\it TEST} 26:600--617. doi:10.1007/s11749-017-0526-6.

\bibitem[Hutchinson and Lai(1990)]{Hutchinson90} Hutchinson, T. P., and C. D. Lai. 1990. {\it Continuous Bivariate Distributions, Emphasising Applications}. Adelaide: Rumsby Scientific Publishing.

\bibitem[Kolmogorov(1933)]{Kolmogorov33} Kolmogorov, A. N. 1933. {\it Grundbegriffe der Wahrscheinlichkeitsrechnung}. Berlin: Springer.

\bibitem[Lai et al.(1979)]{Lai79} Lai, T. L., Robbins, H., and C. Z. Wei. 1979. Strong consistency of least squares estimates in multiple regression II. {\it J. Multivariate Anal.} 9:343--362. doi:10.1016/0047-259X(79)90093-9.

\bibitem[Lai and Xie(2000)]{Lai00} Lai, C. D., and X. Xie. 2000. A new family of positive quadrant dependent bivariate distributions. {\it Statist. Probab. Lett.} 46:359--364. doi:10.1016/S0167-7152(99)00122-4.

\bibitem[Lehmann(1966)]{Lehmann66} Lehmann, E. 1966. Some concepts of dependence. {\it Ann. Math. Stat.} 37:1137--1153. doi:10.1214/aoms/1177699260.

\bibitem[Louhichi(2000)]{Louhichi00} Louhichi, S. 2000. Convergence rates in the strong law for associated random variables. {\it Probab. Math. Statist.} 20:203--214.

\bibitem[Lita da Silva and Mexia(2013)]{Lita13} Lita da Silva, J., and J. T. Mexia. 2013. Strong consistency of least squares estimates with i.i.d. errors with mean values not necessarily defined. {\it Statistics} 47:707--714. doi:10.1080/02331888.2011.581759.

\bibitem[Lita da Silva(2014)]{Lita14} Lita da Silva, J. 2014. Some strong consistency results in stochastic regression. {\it J. Multivariate Anal.} 129:220--226. doi:10.1016/j.jmva.2014.04.022.

\bibitem[Lita da Silva(2015)]{Lita15} Lita da Silva, J. 2015. Almost sure convergence for weighted sums of extended negatively dependent random variables. {\it Acta Math. Hungar.} 146:56--70. doi:10.1007/s10474-015-0502-0.

\bibitem[Lita da Silva(2018)]{Lita18} Lita da Silva, J. 2018. Strong laws of large numbers for pairwise quadrant dependent random variables. {\it Statist. Probab. Lett.} 137:349--358. doi:10.1016/j.spl.2018.01.031.

\bibitem[Liu and Chen(2005)]{Liu05} Liu, J., and X. Chen. 2005. Consistency of LS estimator in simple linear EV regression models. {\it Acta Math. Sci. Ser. B Engl. Ed.} 25:50--58. doi:10.1016/S0252-9602(17)30260-6.

\bibitem[Matu{\l}a(2005)]{Matula05} Matu{\l}a, P. 2005. On almost sure limit theorems for positively dependent random variables. {\it Statist. Probab. Lett.} 74:59--66. doi:10.1016/j.spl.2005.04.032.

\bibitem[Newman(1984)]{Newman84} Newman, C. M. 1984. Asymptotic independence and limit theorems for positively and negatively dependent random variables. In: Y. L. Tong (Ed.), Institute of Mathematical Statistics Lecture Notes -- Monograph Series 5:127--140. doi: 10.1214/lnms/1215465639.

\bibitem[Tong(1980)]{Tong80} Tong, Y. L. 1980. {\it Probability Inequalities in Multivariate Distributions}. New York: Academic Press.

\bibitem[Walk(2005)]{Walk05} Walk, H. 2005. Strong laws of large numbers by elementary tauberian arguments. {\it Monatsh. Math.} 144:329--346. doi:10.1007/s00605-004-0284-x.

\bibitem[Yang et al.(2008)]{Yang08} Yang, S., Su, C., and K. Yu. 2008. A general method to the strong law of large numbers and its applications. {\it Statist. Probab. Lett.} 78:794--803. doi:10.1016/j.spl.2007.09.046.

\end{thebibliography}
\end{document}